\newtheorem{theorem}{Theorem}[section]
\newtheorem{lemma}[theorem]{Lemma}
\theoremstyle{definition}
\newtheorem*{acknowledgement}{Acknowledgement}
\theoremstyle{remark}
\numberwithin{equation}{section}
\begin{document}

\title{A VINOGRADOV-TYPE PROBLEM IN ALMOST PRIMES}
\let\thefootnote\relax\footnotetext{2010 \textit{Mathematics Subject Classification}. 11P32, 11P55.}
\let\thefootnote\relax\footnotetext{Keywords: Hardy--Littlewood circle method, Vinogradov three primes theorem, almost primes, lattices.}
\let\thefootnote\relax\footnotetext{Supported by NCN Preludium 11, 2016/21/N/ST1/02599.}

\author{Pawe\L ~ Lewulis}

\address{Institute of Mathematics, Polish Academy of Sciences, \'{S}niadeckich 8, 00-656 Warsaw, Poland \[ \]}

\email{plewulis@impan.pl}

\begin{abstract}
We generalise the Vinogradov's three primes theorem to linear equations in almost primes.  For $m\geqslant 3$ and fixed positive integers $c_1, \dots ,c_m, r_1, \dots , r_m$ we find the asymptotic formula for number of solutions in positive integers of the equation $c_1n_1 + \dots  + c_m n_m = N$, given that every $n_i$ has exactly $r_i$ prime factors. We also assume that at least three of the $r_i$ are equal to 1. The main novel idea is to introduce a variant of Vinogradov's theorem with varying coefficients and then, to create almost primes out of them in a combinatorial manner.
\end{abstract}

\maketitle

\section{Introduction}

One of the best known problems in additive combinatorics which are arleady solved is without a doubt so called weak (or ternary) Goldbach conjecture, which can be stated in the following form:

\begin{theorem}\label{1.1}
Every odd number $N$ greater than 7 can be represented as a sum of three primes.

\end{theorem}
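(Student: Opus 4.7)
The plan is to apply the Hardy-Littlewood circle method, essentially following Vinogradov's original argument. The first step would be to separate off small $N$: the values $3,5,7$ are already prime, and every odd $N$ up to some explicit bound $N_0$ can be checked by inspection, so it is enough to handle sufficiently large odd $N$. For such $N$ I would introduce the weighted exponential sum
\[
S(\alpha) := \sum_{p \leqslant N} (\log p)\, e(p\alpha), \qquad e(x) = e^{2\pi i x},
\]
so that the weighted representation count
\[
R(N) := \int_0^1 S(\alpha)^3\, e(-N\alpha)\, d\alpha = \sum_{p_1+p_2+p_3 = N} (\log p_1)(\log p_2)(\log p_3)
\]
becomes the main object of study; a lower bound $R(N) \gg N^2$ implies the existence of the desired triple of primes.

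Next I would dissect $[0,1]$ into major arcs $\mathfrak{M}$ (narrow neighbourhoods of rationals $a/q$ with $q \leqslant (\log N)^B$) and minor arcs $\mathfrak{m}$. On $\mathfrak{M}$, the Siegel-Walfisz theorem supplies the approximation
\[
S\bigl(\tfrac{a}{q}+\beta\bigr) = \frac{\mu(q)}{\varphi(q)} \sum_{n \leqslant N} e(n\beta) + O\bigl(N e^{-c\sqrt{\log N}}\bigr),
\]
and summation over major arcs should then produce the expected main term
\[
\int_{\mathfrak{M}} S(\alpha)^3 e(-N\alpha)\, d\alpha = \tfrac{1}{2}\mathfrak{S}(N)\, N^2 + o(N^2),
\]
with $\mathfrak{S}(N)$ the standard singular series, bounded below by a positive absolute constant whenever $N$ is odd.

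For the minor arcs I would rely on the pointwise bound $\sup_{\alpha \in \mathfrak{m}} |S(\alpha)| \ll N (\log N)^{-A}$, valid for any fixed $A > 0$. Combined with Parseval's bound $\int_0^1 |S(\alpha)|^2\, d\alpha \ll N \log N$, this yields $\int_{\mathfrak{m}} |S(\alpha)|^3\, d\alpha \ll N^2 (\log N)^{1-A}$, which is swamped by the major-arc main term once $A$ is chosen large enough. Assembling the two contributions gives $R(N) \gg N^2$ and completes the argument.

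The main obstacle is, as ever, the minor-arc estimate on $|S(\alpha)|$: one must extract genuine cancellation in $\sum_{p \leqslant N} e(p\alpha)$ when $\alpha$ is poorly approximable by rationals with small denominator. The standard route is to apply Vaughan's identity to decompose the von Mangoldt function into a bounded number of type-I and type-II bilinear sums, handle the type-I sums by direct summation, and bound the type-II sums via Cauchy-Schwarz together with Weyl-type estimates that exploit the Diophantine properties of $\alpha$. This is where the multiplicative structure of the primes enters in an essential way, and where a fully quantitative version of the theorem, such as Helfgott's, requires delicate numerical work.
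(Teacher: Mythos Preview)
Your circle-method outline is the standard Vinogradov argument and is essentially what the paper itself invokes---indeed, the paper does not prove Theorem~1.1 at all but quotes it as background, citing Vinogradov for sufficiently large $N$ and Helfgott for the full range. The major/minor arc machinery you sketch is exactly the same apparatus the paper develops in Sections~2--3 for its own main theorem (in the more general setting of $m$ weighted variables), so on that level your approach and the paper's are aligned.

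There is, however, a genuine gap in your reduction to ``sufficiently large $N$''. You write that small $N$ ``can be checked by inspection'' up to some explicit $N_0$, but the Siegel--Walfisz input you use on the major arcs is \emph{ineffective}: the implied constant in the error term depends on a possible Siegel zero in a way that cannot be made explicit. Consequently your argument produces no computable $N_0$ below which to run a finite check, and the statement as written (``every odd $N>1$'') is not covered. Closing this gap is precisely the content of the Chen--Wang, Liu--Wang, and ultimately Helfgott papers that the present article cites; it is not a triviality that can be absorbed into ``by inspection''. Your final paragraph acknowledges that a fully quantitative version is delicate, but the proposal as stated still claims the full theorem while only delivering the ineffective asymptotic version.
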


The assertion of Theorem \ref{1.1} was proven to be correct for all sufficiently large $N$ in 1937 by Vinogradov \cite{F}. Later Chen and Wang \cite{G} gave the first effective threshold. Recently, Helfgott \cite{B,C} gave new bounds, which were strong enough to cover all remaining cases.

The proof of the ineffective version of Theorem 1.1 gives us also the precise asymptotic formula for the number of solutions of the equation $p_1 + p_2 + p_3 = N$ for $p_1,p_2,p_3$ being primes. For technical reasons it is easier to attach a weight $\log p$ to each power of a prime $p$ and deal with the sum
\\
\[ \mathcal{R}_3 (N) := \sum_{ \substack{ n_1,n_2,n_3 \\ n_1+n_2+n_3=N }  } \Lambda(n_1) \Lambda(n_2) \Lambda(n_3), \]
\\
where $\Lambda (n)$ denotes the von Mangoldt function. 

In this paper we calculate the asymptotic formula for number of solutions in almost primes of the more general equation $c_1 n_1+\dots  + c_m n_m = N$, where the $c_i$ are some fixed positive integers (linear equations and systems of linear equations in primes were studied in detail for instance in \cite{Balog,GreenTao}) and $m \geqslant 3$. Formally, we assume that the number of prime divisors of any $n_i$ is equal to $r_i$, where $(r_1, \dots ,r_m)$ is some sequence of fixed positive integers. There are also certain technical obstacles compelling us to assume that at least three of the $r_i$ are equal to $1$. We prove the following result.

\begin{theorem}\label{MAIN} Fix $m\geqslant 3$. Let $c_1, \dots , c_m$ be fixed positive integers satisfying ${(c_1, \dots ,c_m)=1}$, and let $r_1, \dots ,r_m$ be a sequence of fixed positive integers containing at least three elements being equal to $1$. Then,
\begin{multline*}
\sum_{   \substack{  n_1,\dots ,n_m \\  c_1 n_1+\dots  + c_m n_m = N \\ \forall i ~\Omega (n_i) = r_i }} 1 ~=~
\frac{1}{(m-1)!} \frac{1}{(r_1 -1)! \cdots (r_m -1)!}\frac{1}{c_1\cdots c_m} \\ \times 
\frac{  N^{m-1} }{\log ^mN} ( \log \log N)^{r_1 + \dots + r_m - m}(\mathfrak{S}_{c_1, \dots ,c_m}(N) + o(1)), 
\end{multline*}
where
\begin{multline*}
\mathfrak{S}_{c_1,\dots ,c_m}(N) := \\
\prod_{p|N} \left( 1 + \frac{\mu \left( \frac{p}{(c_1,p)} \right) \cdots {\mu \left( \frac{p}{(c_m,p)} \right)} }{\varphi \left( \frac{p}{(c_1,p)} \right) \cdots {\varphi \left( \frac{p}{(c_m,p)} \right)}}(p-1) \right)  \prod_{p \nmid N} \left( 1 - \frac{\mu \left( \frac{p}{(c_1,p)} \right) \cdots {\mu \left( \frac{p}{(c_m,p)} \right)} }{\varphi \left( \frac{p}{(c_1,p)} \right) \cdots {\varphi \left( \frac{p}{(c_m,p)} \right)}} \right),
\end{multline*}
and $\Omega (n)$ denotes the number of prime factors of $n$ counted with multiplicity. 
\end{theorem}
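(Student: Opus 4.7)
The plan is to apply the Hardy--Littlewood circle method. Define the exponential sums
\[ S_i(\alpha) = \sum_{\substack{n \leq N/c_i \\ \Omega(n) = r_i}} e(c_i n \alpha), \]
so that the quantity of interest equals $\int_0^1 \prod_{i=1}^m S_i(\alpha)\, e(-N\alpha)\, d\alpha$. After a Farey dissection into major arcs $\mathfrak{M}$ of width $N^{-1}(\log N)^A$ around rationals $a/q$ with $q \leq (\log N)^A$, and the complementary minor arcs $\mathfrak{m}$, the two contributions are treated separately.

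On the major arcs the first key analytic input is a Siegel--Walfisz type asymptotic
\[ \sum_{\substack{n \leq x,\ \Omega(n) = r \\ n \equiv a \pmod{q}}} 1 \; \sim \; \frac{\mathbf{1}_{(a,q)=1}}{\varphi(q)} \cdot \frac{x (\log \log x)^{r-1}}{(r-1)! \log x}, \]
valid uniformly for $q \leq (\log x)^A$, which follows from the Selberg--Delange method applied in arithmetic progressions. Combined with the standard extraction of Gauss sums, one obtains $S_i(a/q + \beta) \approx F_i(a,q) \cdot \widetilde{\rho}_i(\beta)$ for a local arithmetic factor $F_i$ and a smooth transform $\widetilde{\rho}_i$. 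Multiplying these out, carrying out the $\beta$-integration, and substituting $u_i = c_i x_i$ produces the Dirichlet/Beta volume $N^{m-1}/((m-1)!\, c_1 \dots c_m)$ together with the density factors $(\log \log N)^{r_i-1}/((r_i-1)! \log N)$ from each variable. The arithmetic piece assembles as the product $\mathfrak{S}_{c_1,\dots,c_m}(N) = \prod_p \sigma_p$, where each local factor arises from averaging the ratio $\mu(p/(c_i,p))/\varphi(p/(c_i,p))$ over $a \pmod{p}$ against the twist $e(-aN/p)$; isolating the trivial residue $a \equiv 0$ and applying Ramanujan's formula gives the dichotomy $p \mid N$ versus $p \nmid N$ exactly as in the statement.

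On the minor arcs, relabel so that $r_1 = r_2 = r_3 = 1$. Partial summation from Vinogradov's bound $|\sum_{n \leq x} \Lambda(n) e(\alpha n)| \ll x/(\log x)^B$, valid for any $B$ uniformly on $\mathfrak{m}$, gives $\sup_{\alpha \in \mathfrak{m}} |S_1(\alpha)| \ll N/(\log N)^{B-1}$. The remaining indices are handled via Cauchy--Schwarz together with the Parseval identity $\int_0^1 |S_i(\alpha)|^2\, d\alpha = \pi(N/c_i)$ for $i = 2, 3$, and for $i \geq 4$ the trivial bound $|S_i(\alpha)| \leq \pi_{r_i}(N/c_i) \ll N (\log \log N)^{r_i-1}/(c_i \log N)$ from Landau's theorem. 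Choosing $B$ large makes the minor arc integral negligible compared to the main term.

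The main obstacle will be the Selberg--Delange asymptotic for $r$-almost primes in arithmetic progressions with uniformity in $q$ up to $(\log x)^A$; keeping the error terms under control in the $q$-aspect, and identifying the local factor $\mu(p/(c_i,p))/\varphi(p/(c_i,p))$ correctly when $p$ shares factors with the coefficients $c_i$, is where the combinatorial arguments mentioned in the abstract are most needed. Once this analytic input is in place, the minor arc estimate and the assembly of the singular series reduce to standard circle method bookkeeping.
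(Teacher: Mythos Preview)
Your outline is a valid circle-method strategy, but it takes a genuinely different route from the paper's. The paper \emph{never} forms exponential sums over $r$-almost primes. Instead it writes each $n_j$ with $\Omega(n_j)=r_j$ as a product $n_j^{(1)}\cdots n_j^{(r_j)}$ of primes, absorbs the factors $n_j^{(2)},\dots,n_j^{(r_j)}$ (restricted to a range $(Q,N^{1/24mr}]$) into the coefficient by setting $b_j=c_j\eta_j$ with $\eta_j=\prod_{i\geq 2}n_j^{(i)}$, and then applies the \emph{classical} Vinogradov theorem for primes to the equation $b_1 n_1^{(1)}+\dots+b_m n_m^{(1)}=N$ with varying coefficients $b_j\leq N^{1/12m}$. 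The combinatorial arguments alluded to in the abstract are not about identifying local factors in a singular series; they are the bookkeeping in Section~5 (``Cutting off'') that controls the error from restricting the ranges of the $n_j^{(i)}$, handles the coprimality condition $(\eta_1,\dots,\eta_m)=1$, and passes between ordered and unordered factorisations via the $r_j!$ multiplicities.

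What each approach buys: the paper's factorisation trick needs only the standard Siegel--Walfisz theorem for primes as analytic input (plus Mertens and Landau's $\pi_k$ asymptotic for crude bounds), at the cost of a more intricate outer summation over the $\eta_j$. Your direct approach is conceptually cleaner but the input you flag as the ``main obstacle'' is real: a uniform asymptotic for $\pi_r(x;q,a)$ with $q\leq(\log x)^A$ is not quite off the shelf, because $r$-almost primes with $r\geq 2$ can have prime factors $\leq q$, so residues $a$ with $(a,q)>1$ carry a nontrivial (though lower-order, by a factor $1/\log\log x$) mass that must be tracked when extracting the Ramanujan sum. This is manageable via Selberg--Delange for Dirichlet $L$-functions, but it is more than routine, and it is precisely what the paper's device avoids.
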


In Section \ref{geometry_of_numbers} we present Lemma \ref{LA.2}, proven via tools acquired from the geometry of numbers. It plays a key role in Sections \ref{major} and \ref{minor}, where we adapt the circle method to calculate the number of solutions of the equation $b_1 n_1+\dots  + b_m n_m = N$ in weighted primes, where we let the $b_i$ depend on $N$ to a certain extent (precisely, we assume that $b_i \ll N^{5/12m(2m-1)}$). Afterwards, in Section \ref{reducing} we discard the von Mangoldt weights and replace them with characteristic functions. In the last Section \ref{CuttingOff} we construct almost primes from primes in a combinatorial manner, which finishes the proof of Theorem \ref{MAIN}. In Appendix there are four technical lemmas and Lemma \ref{L3.2} which elaborates a bit on the nature of $\mathfrak{S}_{c_1,\dots , c_m}$ function. 

\subsection*{Notation}

By ${\log}$ we always denote the natural logarithm. To avoid any disturbances arising from small $N$ we assume that $N > 10\, 000$. We use the notation $\mathbf{N}=\{1,2,3,\dots \}$. We also adopt the following notions most of which are common in analytic number theory:

\begin{itemize}
\item $\varphi (n) := \left| \left( \mathbf{Z} / n \mathbf{Z} \right)^\times \right|$ denotes Euler totient function; 
\item $\tau (n)  := \sum_{d|n} 1 $ denotes the divisor function; 
\item By $(a,b)$ and $[a,b]$ we denote the greatest common divisor and the lowest common multiple, respectively;
\item For a logical formula $\phi$ we define the indicator function $\mathbf{1}_{\phi (x)}$ that equals $1$ when $\phi (x)$ is true, and $0$ otherwise;
\item By $\mathbf{R}/\mathbf{Z}$ we denote the appropriate quotient group -- in fact, in most cases we simply identify it with the interval $[0,1)$;
\item We make use of the `big $O$', the `small $o$', and the `$\ll$'  notation in a standard way. We also consider $m$ as fixed throughout the paper. 
\end{itemize}

\begin{acknowledgement} I would like to thank the anonymous referee for many helpful and very insightful comments. 
\end{acknowledgement}


\[ \]
\section{Geometry of numbers}\label{geometry_of_numbers}
Let us define $J_{b_1,\dots ,b_m}(N)$ to be the number of tuples $(n_1,\dots ,n_m)\in \mathbf{N}^m$ satisfying the equation  $b_1n_1+\dots +b_mn_m=N$ for some $b_1,\dots ,b_m\in \mathbf{N}$. 

We define a \textit{lattice} as a discrete additive subgroup of $\mathbf{R}^M$ for some $M \in \mathbf{N}$.
Let also $\mathbf{v}_1,\dots ,\mathbf{v}_K \in \mathbf{R}^M$ be linearly independent vectors, and let $K \in \mathbf{N}$.
If $\Lambda$ is a lattice and 
\[ \Lambda = \{ a_1 \mathbf{v}_1 + \dots  + a_K \mathbf{v}_K : a_1,\dots ,a_K \in \mathbf{Z} \}, \]
then the collection of points $\{\mathbf{v}_j\}_{j=1}^K$ is called a \textit{basis} of the lattice, and the set
\[ \{ t_1 \mathbf{v}_1 + \dots  + t_K \mathbf{v}_K : t_1,\dots ,t_K \in [0,1) \}\]
is called a minimal parallelogram of the lattice. Neither basis nor minimal parallelogram are unique, although the $K$-dimensional measure of this parallelogram is, and henceforth it shall be called the determinant of the lattice. We denote it as $d(\cdot)$. 

Recall the following theorem from \cite[Chapter 2.10.4, Theorem 4]{E}.
\begin{theorem}\label{TA.1}
In every lattice $L$ there exists a basis $\{\mathbf{v}_j\}_{j=1}^K$ which satisfies
\\
\[ \prod_{j=1}^K \| \mathbf{v}_j \| \ll_K d(L). \]
\end{theorem}

Our goal in this chapter is to prove the following result.

\begin{lemma}\label{LA.2}
Let $\delta \in \mathbf{R}_+$. Consider integers $1 \leqslant b_1,\dots ,b_m \leqslant N^\delta$ such that $(b_1,\dots ,b_m)=1$. Then,
\\
\[J_{b_1,\dots ,b_m}(N) = \frac{N^{m-1}}{(m-1)!} \frac{1}{b_1\cdots b_m} + O \left( N^{2\delta m (m-1) + m -2} \right). \]
\end{lemma}

\begin{proof}
Define two more lattices:
\begin{align*}
\Lambda &:= \mathbf{Z}^m, \\
 \widetilde{\Lambda} &:= \{ (n_1,\dots ,n_m) \in \mathbf{Z}^m \colon n_1+\dots +n_m=0 \}.
\end{align*}
We can transform the condition $b_1n_1 + \dots  + b_mn_m=0$ into the conjunction of $n_1+\dots +n_m=0$ and $b_1|n_1,\dots ,b_m|n_m$. Define yet another lattices:
\begin{align*}
 \Lambda^\star &:= \{ (n_1,\dots ,n_m) \in \mathbf{Z}^m \colon b_1|n_1,\dots ,b_m|n_m \}, \\
\widetilde{\Lambda}^\star &:= \{ (n_1,\dots ,n_m) \in \mathbf{Z}^m \colon  b_1|n_1,\dots ,b_m|n_m, ~n_1+\dots +n_m=0 \}. 
\end{align*}
Obviously, $\Lambda^\star \subset \Lambda$ and $\widetilde{\Lambda}^\star \subset \widetilde{\Lambda}$. 
We have
\[ \operatorname{rank}
 \begin{bmatrix}
b_1b_m & 0 & \cdots & 0 & -b_1b_m \\
0 &  b_2b_m & \cdots & 0 & -b_2b_m\\
 \vdots & \vdots & \ddots & \vdots & \vdots\\
 0 & 0 & \cdots & b_{m-1}b_m & -b_{m-1}b_m
 \end{bmatrix} = m-1.
\]
Hence, the following set of linearly independent vectors (not necessarily a basis of $\widetilde{\Lambda}^\star$):
\begin{equation}\label{vectors}
 \{ (0,\dots ,b_ib_m,\dots ,0,-b_ib_m):i=1,\dots ,m-1 \} \subset \widetilde{\Lambda}^\star ,
 \end{equation}
generates a non-degenerated parallelogram of dimension $m-1$. Its volume expressed as a square root of the modulus of the Gram's matrix can be estimated from above by the Hadamard's inequality:
\begin{equation}\label{Hadamard}
 \sqrt{ \det [ b_ib_jb_m^2(1 + \mathbf{1}_{i=j})]_{i,j=1}^{m-1}} \leqslant  2^\frac{m-1}{2} (m-1)^\frac{m-1}{4} N^{2\delta (m-1)}.
 \end{equation}
Let us denote the $\mathbf{Z}$-module generated by the set of vectors from (\ref{vectors}) by $V$.

A submodule of a finitely generated free module of rank $n$ over a principal ideal domain is also free. Moreover, the rank of this submodule cannot exceed $n$. Therefore, the lattices $V$, $\widetilde{\Lambda}^\star$, and $\widetilde{\Lambda}$, all treated as $\mathbf{Z}$-modules, have to be free. Consequently, the chain of inclusions
 \[\mathbf{Z}^{m-1} \simeq V \subset  \widetilde{\Lambda}^\star \subset \widetilde{\Lambda} \simeq \mathbf{Z}^{m-1}\]
 implies $\widetilde{\Lambda}^\star \simeq \mathbf{Z}^{m-1}.$ 

 By (\ref{Hadamard}) and Theorem \ref{TA.1} we can produce a basis $\{ \mathbf{v}_1,\dots ,\mathbf{v}_{m-1} \} \subset \widetilde{\Lambda}^\star $
satisfying
\[ \prod_{j=1}^{m-1} \| \mathbf{v}_j\| \ll d(\widetilde{\Lambda}^\star) \ll N^{2\delta (m-1)}. \]
On the other hand, the distance between each two points of $ \widetilde{\Lambda}^\star $ is not smaller than $\sqrt{2}$, which implies
$ \| \mathbf{v}_i \| \gg 1 $
for all $1 \leqslant i \leqslant m-1$. Hence,
\begin{equation}\label{hence}
\| \mathbf{v}_j \| \ll N^{2\delta (m-1)} 
\end{equation}
for $j=1,\dots ,m-1$.

Consider the minimal parallelogram
\[  P:=\{ t_1 \mathbf{v}_1 + \dots  + t_{m-1} \mathbf{v}_{m-1} \colon t_1,\dots ,t_{m-1} \in [0,1) \}. \]
We would like to know, how many points from the lattice $\widetilde{\Lambda}$ are contained in every parallelogram of the form $\mathbf{x}+P$ with $\mathbf{x} \in \widetilde{\Lambda}^\star $. To achieve this, let us consider the sequence of maps
\[ \Lambda \xrightarrow[]{~~\pi ~~}   \widetilde{\Lambda} \xrightarrow[]{~~ \rho ~~} \widetilde{\Lambda}/\widetilde{\Lambda}^\star, \]
where $\pi$ is defined as 
\[ \pi : (n_1 , \dots , n_{m-1} , n_m ) \mapsto (n_1 , \dots , n_{m-1}, -(n_1 + \dots + n_{m-1} )), \]
and $\rho$ is the module division. The kernel of $(\rho \circ \pi)$ consists of those $(n_1, \dots , n_m)$ satisfying
\begin{equation}\label{cong}
\begin{aligned}
n_1 &\equiv 0 \mod b_1 \\
&\cdots \\
n_{m-1} &\equiv 0 \mod b_{m-1} \\
n_1 + \dots + n_{m-1} &\equiv 0 \mod b_m .
\end{aligned}
\end{equation}
Let us count the number of solutions of (\ref{cong}) meeting conditions $1 \leqslant n_i \leqslant b_ib_m$ for all $1\leqslant i \leqslant m-1$, and also $n_m=0$. This problem can be rephrased a bit. We may equivalently ask about the number of solutions $(g_1, \dots , g_{m-1} )\in \mathbf{Z}^{m-1}_{b_m}$ of
\begin{equation}\label{rownowaznie}
g_1b_1 + \dots + g_{m-1}b_{m-1} \equiv 0 \mod b_m.
\end{equation}
All these solutions form a submodule of $ \mathbf{Z}^{m-1}_{b_m}$. According to Bézout's lemma, our assumption $(b_1,\dots,b_m)=1$ implies that there exist such $g_1' , \dots , g_{m-1}' \in \mathbf{Z}_{b_m}$ that 
\[ g_1'b_1 + \dots + g_{m-1}'b_{m-1} \equiv 1 \mod b_m. \]
Therefore, the whole module $\mathbf{Z}^{m-1}_{b_m}$ can be decomposed into union of $b_m$ equinumerous cosets, each defined by equation
\[ g_1'b_1 + \dots + g_{m-1}'b_{m-1} \equiv g \mod b_m, \]
with an appopriate choice of $0 \leqslant g \leqslant b_m - 1$. It implies that equation (\ref{rownowaznie}) has exactly $b_m^{m-2}$ solutions. Therefore,  (\ref{cong}) also has 
 $b_m^{m-2}$ solutions satisfying  $1 \leqslant n_i \leqslant b_ib_m$ for all $1\leqslant i \leqslant m-1$, and $n_m=0$. On the other hand, 
we know that if we replaced (\ref{cong}) by the empty set of conditions and left only the extra assumption that $n_m=0$, we would have $b_1 \cdots b_{m-1} b_m^{m-1}$ solutions. Thus, 
\begin{equation}
\# ( \Lambda / \ker ( \rho \circ \pi )  )  = b_1 \cdots b_m. 
\end{equation}
We have the isomorphism
\[   \widetilde{\Lambda}/\widetilde{\Lambda}^\star \simeq \Lambda / \ker ( \rho \circ \pi ),  \]
so $ \widetilde{\Lambda}/\widetilde{\Lambda}^\star$ has exactly  $b_1 \cdots b_m$ elements. Consequently, every parallelogram of the form $\mathbf{x}+P$ with $\mathbf{x} \in \widetilde{\Lambda}^\star $ also contains exactly $b_1\cdots b_m$ elements.

Define the following subsets
\begin{align*}
\widetilde{\Lambda}_N &:=\{ (n_1,\dots ,n_m) \in \mathbf{Z}^m \colon n_1+\dots +n_m=N \}, \\
\widetilde{\Lambda}^\star_N &:= \{ (n_1,\dots ,n_m) \in \mathbf{Z}^m \colon n_1+\dots +n_m=N, ~ b_1|n_1,\dots ,b_m|n_m  \}. 
\end{align*}
If $\mathbf{n}=(n_1,\dots ,n_m) \in \mathbf{Z}^m$ is any solution of the equation $b_1n_1+\dots +b_mn_m=N$ (which certainly exists, because $(b_1,\dots ,b_m)=1$), then we can write $\widetilde{\Lambda}_N = \mathbf{n} + \widetilde{\Lambda}$ and $   \widetilde{\Lambda}^\star_N = \mathbf{n} +   \widetilde{\Lambda}^\star $. According to this, for each $\mathbf{x} \in   \widetilde{\Lambda}^\star_N$  the parallelogram of the form $\mathbf{x} + P$ contains exactly $b_1\cdots b_m$ points from $  \widetilde{\Lambda}_N$.

The number of points of the form $   (n_1,\dots ,n_m) \in \mathbf{N}^m$, satisfying $n_1 +\dots  + n_m =N$, equals
\begin{equation}
{N - 1 \choose m-1}= \frac{N^{m-1}}{(m-1)!} + O(N^{m-2}). 
\end{equation}
For every $\mathbf{x} \in   \widetilde{\Lambda}^\star_N$ the parallelogram  $\mathbf{x} + P$ contains exactly one point from $\widetilde{\Lambda}^\star_N$. Therefore, $J_{b_1,\dots ,b_m}(N)$ is equal to the number of parallelograms of this form contained in the simplex
\[ T:= \operatorname{conv}\{ (N,0,\dots ,0),\dots ,(0,\dots ,0,N,0,\dots ,0),\dots ,(0,\dots ,0,N) \} \]
up to the number of those having non-empty intersections with $\partial T$. Let us define $R:= \operatorname{diam} P$. Observe that (\ref{hence}) implies
\begin{equation}\label{promien}
 R \leqslant \sum_{j=1}^{m-1} \| \mathbf{v}_j \| \ll N^{2\delta (m-1)}.
 \end{equation}
From $\mathbf{x} + P \subset B(\mathbf{x}, R)$ and (\ref{promien}) we obtain
\begin{equation*}
 | \{ \mathbf{x} \in \widetilde{\Lambda}^\star_N \colon \operatorname{dist}(\mathbf{x}, \partial T) \leqslant R \}| 
\leqslant  | \{ \mathbf{x} \in \Lambda \colon \operatorname{dist}(\mathbf{x}, \partial T) \leqslant R \}| \ll R^m|\partial T| \ll N^{2\delta m (m-1) + m -2}.
 \end{equation*}
We conclude that there are at most $O(N^{2\delta m (m-1) + m -2})$ parallelograms of the form $\mathbf{x}+P$ for $\mathbf{x} \in \widetilde{\Lambda}^\star_N$ having a non-empty intersection with $\partial T$. Hence, the claim follows. Also note that the main term dominates the error term for each $\delta < \frac{1}{2m(m-1)}$ which is good enough for our purposes.
\end{proof}


\[ \]
\section{Major arcs}\label{major}

Put $Q:=\log ^{B} N$ for any real $B>0$. Then, for integers $q \leqslant Q$ and $a$, such that $(a,q)=1$, we set
\[ \mathcal{M}_{a,q} = \left\{ \alpha \in \mathbf{R}/\mathbf{Z} : \| \alpha - \frac{a}{q}\|_{\mathbf{R}/\mathbf{Z}} \leqslant \frac{Q}{N} \right\} , \]
where $\| \cdot \|_{\mathbf{R}/\mathbf{Z}}$ denotes the distance to the nearest integer. We define the union of all major arcs as 
\[ \mathcal{M} := \bigcup_{q \leqslant {Q} } \bigcup_{ \substack{a=1 \\ (a,q)=1}}^{q} \mathcal{M}_{a,q}, \]
and the union of all minor arcs as $\mathfrak{m} := \mathbf{R}/ \mathbf{Z} \setminus \mathcal{M}$. The name `minor arc' may be a bit misleading since the measure of $\mathfrak{m}$ actually approaches 1 as $N \rightarrow \infty$. We put
\begin{align*}
 S_i (N, \alpha ) &:= \sum_{n \leqslant N/b_i} \Lambda (n) \, e(nb_i \alpha), \\
 S (N, \alpha ) &:= \sum_{n \leqslant N} \Lambda (n) \, e(n \alpha), \\
  u_i(y) &:=\sum_{n \leqslant N/b_i } e(nb_iy) , \\
 u(y) &:=\sum_{n \leqslant N } e(ny).
\end{align*}
Recall a useful lemma from \cite[Lemma 3.1]{Vaughan}. 

\begin{lemma}\label{Vaughan_major} There is a positive constant $C$ such that whenever $1 \leqslant a \leqslant q \leqslant Q$, $(a,q)=1$, $\alpha \in \mathcal{M}_{a,q}$ one has
\[ S (N, \alpha) = \frac{\mu (q)}{\varphi (q)} u (y) + 
   O \left( N  \exp \left(-c \sqrt{\log N} \right) \right), \]
   where $\alpha := \frac{a}{q} + y$.
\end{lemma}

It is worth mentioning that this lemma is ineffective due to its relience on the Siegel--Walfisz theorem. It also makes the whole proof of Theorem \ref{MAIN} ineffective.

Now, we prove the following result. 

\begin{theorem}\label{Valet}
Fix $c_1,\dots ,c_m \in \mathbf{N}$. Let $b_i:=c_i \eta_i$ for $i=1,\dots ,m$, where each $\eta_i$ is some positive integer with prime divisors greater than $Q$. Let us further assume that $(b_1,\dots ,b_m)=1$, and that $b_1, \dots  ,b_m \leqslant N^{\delta}$ for some $\delta \in (0,\frac{5}{12m(2m-1)})$. Then, for every $\varepsilon >0$ we have
\begin{multline*}
 \sum_{   \substack{ n_1,\dots ,n_m \\ b_1 n_1+\dots + b_m n_m = N }} \Lambda (n_1) \cdots  \Lambda (n_{m}) ~=~ \frac{1}{(m-1)!}\frac{N^{m-1}}{b_1\cdots b_m} \mathfrak{S}_{c_1,\dots ,c_m}(N) \\
+ O\left( \frac{N^{m-1}}{b_1\cdots b_m Q^{m-2-\varepsilon}} \right) +  
  \int \limits_{\mathfrak{m}} \prod_{i=1}^{m} S_i(N,\alpha)\, e(-N\alpha) \, d\alpha.
 \end{multline*}
\end{theorem}

\begin{proof}

Apply Lemma \ref{Vaughan_major} and replace $\alpha \mapsto b_i \alpha$, $y \mapsto b_i y$, and $a$ by an integer $\widetilde{a}$ satisfying $1 \leqslant \widetilde{a} \leqslant q$ and $\widetilde{a} \equiv b_i a \bmod q$. Thus, we obtain
\begin{equation}\label{3.4}
S_i (N, \alpha) = \frac{\mu \left(\frac{q}{(b_i,q)} \right)}{\varphi \left( \frac{q}{(b_i,q)} \right) }u_i (y) + 
    O \left( \frac{N}{b_i } \exp \left( -C \sqrt{ \log N} \right) \right)
\end{equation}
for some positive constant $C$.
We are ready to estimate the contribution of a single major arc to the integral:
\begin{multline}\label{3.5}
\int \limits_{\mathcal{M}_{a,q}} \prod_{i=1}^m S_i (N, \alpha) \, e(-N \alpha) \, d \alpha =  \prod_{i=1}^m  \frac{\mu \left(\frac{q}{(b_i,q)} \right)}{\varphi \left( \frac{q}{(b_i,q)} \right) } \, e\left( - \frac{aN}{q} \right)~ \times \\
\int \limits_{-\frac{Q}{N}}^{\frac{Q}{N}} \prod_{i=1}^m u_i (y) \, e(-Ny) \, dy + O \left(  \sum_{ \substack{ \omega \in \{ 0,1 \}^m \\ \omega \not= (1,\dots ,1) }}  \int \limits_{-\frac{Q}{N}}^{\frac{Q}{N}} f_{\omega_1}(y)\cdots f_{\omega_m}(y) e(-Ny)\, dy\right),
\end{multline}
where 
\begin{equation}\label{3.6}
  f_{\omega_j}(y):=\begin{cases}
   u_i(y) \times \mu \left(\frac{q}{(b_i,q)} \right)/\varphi \left( \frac{q}{(b_i,q)} \right)   & \text{if $\omega_j = 1$}\\
     \frac{N}{b_i } \exp \left( -C \sqrt{ \log N} \right) & \text{if $\omega_j = 0$ }
  \end{cases}.
\end{equation}

We have the obvious inequality $|u_i(y)| \leqslant \frac{N}{b_i}$. Thus, if at least one coordinate of $\omega$ is equal to 0, then for such $\omega$ we get
\\
\begin{equation}\label{3.7} \int \limits_{-\frac{Q}{N}}^{\frac{Q}{N}} f_{\omega_1}(y)\cdots f_{\omega_m}(y) \, e(-Ny)\, dy \ll
\frac{1}{b_1\cdots b_m} \frac{N^{m-1}}{ \exp( (C-\varepsilon)\sqrt{\log N} ) }.
\end{equation}

Summing over every possible $a,q$ one gets
\\
\begin{multline}\label{3.8}
\int \limits_{\mathcal{M}} \prod_{i=1}^m S_i (N, \alpha) \, e(-N \alpha) \, d \alpha = 
 \sum_{q \leqslant Q} 
\prod_{i=1}^m  \frac{\mu \left(\frac{q}{(b_i,q)} \right)}{\varphi \left( \frac{q}{(b_i,q)} \right) } \sum_{ \substack{ a\leqslant q \\ (a,q)=1 }}  e\left( - \frac{aN}{q} \right)  \int \limits_{-\frac{Q}{N}}^{\frac{Q}{N}} \prod_{i=1}^m u_i (y) \, e(-Ny) \, dy
\\
+ O \left(  \frac{N^{m-1}}{ b_1\cdots b_m \exp \left( (C-3\varepsilon)\sqrt{\log N} \right) } \right) .
\end{multline}
Put $c_q(N) :=  \sum_{ \substack{ a\leqslant q \\ (a,q)=1 }}  e\left( - \frac{aN}{q} \right)$. Due to multiplicativity of Ramanujan's sums we conclude that
\begin{multline}\label{3.9}
\sum_{q \leqslant Q} 
\prod_{i=1}^m  \frac{\mu \left(\frac{q}{(b_i,q)} \right)}{\varphi \left( \frac{q}{(b_i,q)} \right) } c_q(N) =   \prod_{p|N} \left( 1 + \prod_{i=1}^m  \frac{\mu \left(\frac{p}{(c_i,p)} \right)}{\varphi \left( \frac{p}{(c_i,p)} \right) } (p-1) \right)
 \prod_{p \nmid N} \left( 1 - \prod_{i=1}^m  \frac{\mu \left(\frac{p}{(c_i,p)} \right)}{\varphi \left( \frac{p}{(c_i,p)} \right) } \right)
\\
+  O \left( \sum_{q>Q} \prod_{i=1}^m \frac{\mu \left(\frac{q}{(c_i,q)} \right)}{\varphi \left( \frac{q}{(c_i,q)} \right) } c_q(N) \right)= \mathfrak{S}_{c_1,\dots ,c_m}(N) + O \left( \frac{1}{Q^{m-2-\varepsilon} } \right).
\end{multline}
We could replace $b_i$ by $c_i$ because $(b_i,q)=(c_i,q)$ for each $q\leqslant Q$ and each $1 \leqslant i \leqslant m$. 
Moreover,

\[  |\mathfrak{S}_{c_1,\dots ,c_m}(N)| < \prod_{p} \left( 1 + \frac{1}{\varphi(p)^{m-1}} \right) \ll 1. \]

We are left with the integral on the right hand side of (\ref{3.8}) to estimate. Let us consider the following subsets of $\mathbf{R}/\mathbf{Z}$:

\[ J_k^{(j)} =  \left[ \frac{k}{b_j} - \frac{1}{b_jN^{1/3}},  \frac{k}{b_j} + \frac{1}{b_jN^{1/3}} \right], \]
\[  I_k^{(j)} =  \left[ \frac{k}{b_j} - \frac{1}{b_jN^{1/2}},  \frac{k}{b_j} + \frac{1}{b_jN^{1/2}} \right], \]
for $j=1,\dots ,m$ and $k=0,\dots ,b_i-1$. The distance between two distinct fractions of the form $k/b_j$ satisfies
\\
\begin{equation}\label{3.10}
\left| \frac{k_1}{b_{j_1}} - \frac{k_2}{b_{j_2}} \right|  \geqslant \frac{1}{b_{j_1}b_{j_2}} \geqslant \max \left\{ \frac{1}{b_{j_1}N^{\delta}},  \frac{1}{b_{j_2}N^{\delta}}  \right\} \geqslant \frac{1}{b_{j_1}N^{1/3}} +  \frac{1}{b_{j_2}N^{1/3}} 
\end{equation}
\\
for $N>2^{\frac{3}{1-3\delta}}$ and arbitrary $k_1,k_2 \in \mathbf{Z}$. Consequently, we can assume that $N$ is so large that each pair of intervals of the form $J_k^{(j)}$ centered in different points $k/b_j$ have empty intersection.

From $I_k^{(j)} \subset J_k^{(j)}$ one can decompose\footnote{The idea regarding the $J_k^{j}$ and the $I_k^{(j)}$ intervals is heavily inspired by the comment of user Tal H from \texttt{mathoverflow.net}. }
\begin{multline}\label{3.11}
J_{b_1,\dots ,b_m}(N) = \int \limits_{\mathbf{R}/\mathbf{Z}} \prod_{i=1}^m u_i (y) \, e(-Ny) \, dy =: \int \limits_{\mathbf{R}/\mathbf{Z}}  \\
= \int \limits_{-\frac{Q}{N}}^{\frac{Q}{N}}
+ ~ O\left( \sum_{j=1}^{m} \sum_{k=1}^{b_j -1} \left| ~ \int \limits_{I_k^{(j)}} + 
\int \limits_{J_k^{(j)} \setminus I_k^{(j)}} \right| \right) + \int \limits_{\frac{Q}{N}}^{\frac{1}{bN^{1/3}}} + \int \limits^{-\frac{Q}{N}}_{-\frac{1}{bN^{1/3}}}+ \int \limits_{\mathcal{S}}, 
\end{multline}
where $b:=\min \{ b_1,\dots ,b_m \}$ and $\mathcal{S}$ denotes the set $\mathbf{R}/\mathbf{Z}\setminus \bigcup_{j=1}^m \bigcup_{k=0}^{b_j -1} J_k^{(j)}$. Now, we recall the basic Dirichlet kernel estimation: 
\[ u_i (y) \leqslant \frac{1}{1 - e(b_i y)} \ll \frac{1}{\| b_i y \|_{\mathbf{R}/\mathbf{Z}}}. \]
Applying it, we obtain
\begin{equation}\label{3.12}
  \left|u_i (y) \right| \ll
\begin{cases}
	N^{1/3} & \text{if $\forall_k ~y \not \in J_k^{(i)}$}\\
	N^{1/2} & \text{if $\forall_k ~y \not \in I_k^{(i)}$}\\
	\frac{1}{|b_i y|} & \text{if $y \in \left[-\frac{1}{bN^{1/3}}, \frac{1}{bN^{1/3}} \right]\setminus \{0 \} $}\\
	N & \text{always}
  \end{cases}.
\end{equation}
The assumption $(b_1,\dots ,b_m)=1$ implies that for each $y_0 \in (0,1)$ at most $m-1$ elements from the set $ \{ b_1y_0, \dots , b_my_0 \}$ can be integers. Hence, for every $y \in {\mathbf{R}/\mathbf{Z}} \setminus \left[-\frac{1}{bN^{1/3}}, \frac{1}{bN^{1/3}} \right] $ there exists such an index $j \in \{ 1, \dots , m \}$ that none of the $J_k^{(j)}$ contains $y$. Thus,

\begin{equation}\label{3.13}
  \left| \prod_{i=1}^m u_i (y) \right| \ll
\begin{cases}
	N^{m-1+\frac{1}{3}} & \text{if $y \in \bigcup_{j=1}^m \bigcup_{k=1}^{b_i -1} I_k^{(j)}$}\\
	N^{m-2 + \frac{1}{2} + \frac{1}{3}} & \text{if $y \in \bigcup_{j=1}^m \bigcup_{k=1}^{b_i -1} ( J_k^{(j)} \setminus I_k^{(j)}) $}\\
	\frac{1}{b_1\cdots b_m} \frac{1}{|y|^m} & \text{if $y \in \left[-\frac{1}{bN^{1/3}}, \frac{1}{bN^{1/3}} \right]\setminus \{0 \} $}\\
	N^{\frac{m}{3}} & \text{if $y \in \mathcal{S}$}\\
  \end{cases}.
\end{equation}

From (\ref{3.13}) we calculate
\begin{equation}\label{3.14}
\begin{gathered}
\int \limits_{\mathcal{S}} \ll N^{\frac{m}{3}},\\
 \left( \int \limits_{\frac{Q}{N}}^{\frac{1}{bN^{1/3}}} + \int \limits^{-\frac{Q}{N}}_{-\frac{1}{bN^{1/3}}} \right) \ll \frac{1}{b_1\cdots b_m} \int \limits_{\frac{Q}{N}}^{\frac{1}{bN^{1/3}}} \frac{dy}{y^m} \ll  \frac{1}{b_1\cdots b_m} \left( \frac{N}{Q} \right)^{m-1}, \\
 \sum_{j=1}^{m} \sum_{k=1}^{b_j -1} \left| ~ \int \limits_{I_k^{(j)}} + 
\int \limits_{J_k^{(j)} \setminus I_k^{(j)}} \right| \ll  \sum_{j=1}^{m} \sum_{k=1}^{b_j -1} \left( N^{m-1+\frac{1}{3}}\frac{1}{b_j N^{1/2}} + N^{m-2 + \frac{1}{2} +\frac{1}{3}} \frac{1}{b_j N^{1/3}} \right) \ll N^{(1+\delta)m-\frac{7}{6}}. 
\end{gathered}
\end{equation}

Combining (\ref{3.11})--(\ref{3.14}), and recalling that $\delta < \frac{5}{12m(2m-1)} \leqslant \frac{1}{12m}$, we conclude that
\begin{equation}\label{3.15}
 \int \limits_{-\frac{Q}{N}}^{\frac{Q}{N}} \prod_{i=1}^m u_i (y) \, e(-Ny) \, dy = J_{b_1,\dots ,b_m}(N) + O\left( 
 \frac{1}{b_1\cdots b_m}  \left( \frac{N}{Q} \right)^{m-1} \right).
\end{equation}

From (\ref{3.8}), (\ref{3.9}), (\ref{3.15}), and Lemma \ref{LA.2} we get
\begin{equation}\label{3.16}
\int \limits_{\mathcal{M}} \prod_{i=1}^m S_i (N, \alpha) \, e(-N \alpha) \, d \alpha =  \frac{1}{(m-1)!}\frac{N^{m-1}}{b_1\cdots b_m}
\mathfrak{S}_{c_1,\dots ,c_m}(N) + O\left( \frac{1}{b_1\cdots b_m} \frac{N^{m-1}}{Q^{m-2-\varepsilon}} \right).
\end{equation}
\end{proof}


\[ \]
\section{Minor arcs}\label{minor}

In this section we estimate the contribution of the integral 
\\
\begin{equation}\label{4.1}
 \int \limits_\mathfrak{m} \prod_{i=1}^{m} S_i(N,\alpha) \, e(-N\alpha) \, d\alpha.
\end{equation}
\\
Usually, some variations of the Vinogradov's lemma (Lemma \ref{4.4} in our case) are exploited to establish results of this kind. Firstly, let us recall yet another lemma.

\begin{lemma}[R.~C.~Vaughan]\label{L4.1} Let $X,Y,\alpha \in \mathbf{R}$ where $X,Y\geqslant 1$. Assume that $|\alpha - \frac{a}{q}| \leqslant \frac{1}{q^2}$ for some $a,q \in \mathbf{N}$ such that $(a,q)=1$. Thus,
\[ \sum_{n \leqslant X} \min \left\{ \frac{XY}{n} , \frac{1}{\| n \alpha \|_{\mathbf{R}/\mathbf{Z}}} \right\} \ll \left( \frac{XY}{q}+ X + q \right)\log(2Xq). \]
\end{lemma}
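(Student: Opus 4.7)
My approach would be the classical one: exploit the Diophantine approximation $\alpha = a/q + \beta$ with $|\beta| \leq 1/q^2$ to relate $\|n\alpha\|_{\mathbf{R}/\mathbf{Z}}$ to $\|na/q\|_{\mathbf{R}/\mathbf{Z}}$, then use the equidistribution of the residues $na \bmod q$. I would partition the summation range into consecutive blocks $B_j = \{n : (j-1)q < n \leq jq\}$ for $j = 1,\dots,J$ with $J := \lceil X/q\rceil$. Writing $n = (j-1)q + r$ with $1 \leq r \leq q$, the identity $\{na/q\} = \{ra/q\}$ together with $(a,q)=1$ ensures that as $n$ ranges over $B_j$ the values $\|na/q\|$ realise each of $0, 1/q, 2/q, \ldots, \lfloor q/2\rfloor/q$ with multiplicities at most two.

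The key estimate is then a per-block bound. There is a unique $n \in B_j$ with $q \mid n$ (namely $n = jq$); for this term I bound $\min\{XY/n, 1/\|n\alpha\|\}$ by the first argument, contributing $XY/(jq)$. For the remaining $n \in B_j$ I would argue that $\|n\alpha\| \gg \|na/q\|$ as long as the perturbation $|n\beta| \leq n/q^2$ is smaller than $\tfrac{1}{2}\|na/q\|$. The (few) exceptional $n$ for which this fails are estimated through the $XY/n$ branch of the minimum, and one verifies that summing their contribution over all blocks adds at most $O(XY/q)$. The non-exceptional terms satisfy $1/\|n\alpha\| \ll q/k$ whenever $\|na/q\| = k/q$, so the per-block contribution from them is
\[
\sum_{\substack{n \in B_j \\ \text{non-exc.}}} \frac{1}{\|n\alpha\|_{\mathbf{R}/\mathbf{Z}}} \ll \sum_{k=1}^{\lfloor q/2\rfloor} \frac{q}{k} \ll q \log(2q).
\]
Summing over $j = 1, \ldots, J$ then yields three pieces: the diagonal $\sum_{j=1}^J XY/(jq) \ll (XY/q) \log(2X/q)$ from the $q \mid n$ terms, the equidistribution contribution $J \cdot q \log(2q) \ll (X+q)\log(2q)$ from the rest, and an absorbed $O(XY/q)$ from the exceptional residues. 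Combining and using $\log(2X/q) + \log(2q) \ll \log(2Xq)$ gives the claimed bound $(XY/q + X + q)\log(2Xq)$.

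I expect the main technical obstacle to be the bookkeeping for the exceptional residues, i.e.\ those $n \in B_j$ for which $\|na/q\|$ is comparable to or smaller than the perturbation $n/q^2$. The coprimality $(a,q)=1$ prevents these from clustering inside any single block, but one has to count them carefully to confirm that the resulting $XY/n$ contributions really are dominated by $(XY/q)\log(2Xq)$. A secondary nuisance is that the naive replacement of $\|n\alpha\|$ by $\|na/q\|$ becomes invalid once $n$ grows beyond $q^2$; there one either refines the analysis by rescaling the Diophantine approximation, or notes that the remaining range is short enough for trivial bounds to suffice.
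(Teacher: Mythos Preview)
The paper does not prove this lemma at all: it is quoted as a known result and attributed to Vaughan's monograph, so there is no in-paper argument to compare against. Your sketch is the standard textbook proof (essentially what appears in Vaughan, \emph{The Hardy--Littlewood Method}, Lemma~2.2), and the outline is correct.

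One refinement worth noting on your flagged obstacles: the perturbation $n\beta$ is best handled not by comparing $|n\beta|$ directly to $\|na/q\|$ (which, as you anticipate, fails once $n$ exceeds $q$), but by observing that within a single block $B_j$ the quantity $n\beta$ varies by at most $q|\beta|\leqslant 1/q$. Hence $n\alpha \bmod 1$ differs from $(ra/q + c_j)\bmod 1$, for a block-dependent constant $c_j$, by at most $1/q$. Since the points $ra/q + c_j$ are still $1/q$-separated on the circle, at most $O(1)$ values of $n$ per block satisfy $\|n\alpha\| < 1/(2q)$, and for these the $XY/n \asymp XY/(jq)$ bound applies; the remaining $n$ give the $q\log q$ contribution per block exactly as you wrote. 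This disposes of both the ``exceptional residue'' bookkeeping and the $n>q^2$ worry simultaneously, with no need to rescale the approximation.
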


\begin{proof}
See \cite[Lemma 2.2]{Vaughan}.
\end{proof}

We need the following variation of the Vaughan's identity \cite{Vaughan2}.

\begin{lemma}\label{L4.2} For any real $x, \alpha$, and for $2 \leqslant U,V \leqslant x$ we have the identity
\[ S(x,\alpha) = S_{I,1} - S_{I,2} - S_{II} + S_0, \]
where
\begin{align*}
S_{I,1} &= \sum_{d \leqslant U} \mu (d) \sum_{n \leqslant \frac{x}{d}} ( \log n ) \, e(n d \alpha), \\
S_{I,2} &= \sum_{d \leqslant V} \Lambda (d) \sum_{\delta \leqslant U} \mu (\delta ) \sum_{n \leqslant \frac{x}{d \delta}} e(nd\delta \alpha ), \\
S_{II} &= \sum_{d > U}  \bigg( \sum_{\substack{ \delta \leqslant U \\ \delta | d }} \mu (\delta ) \bigg)  \sum_{\substack{n>V \\ nd \leqslant x }} \Lambda (n) \, e(n d \alpha), \\
S_0 &= \sum_{n \leqslant V} \Lambda (n) \, e(n \alpha). 
\end{align*}
\end{lemma}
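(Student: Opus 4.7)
The plan is to prove this pointwise identity for $\Lambda$ by first deriving the corresponding formal Dirichlet series identity, and then multiplying by $e(n\alpha)$ and summing over $n\leqslant x$. Since everything here is purely algebraic bookkeeping, no analytic input is required; the only facts I would use are $\mu\ast\log=\Lambda$ and $\mu\ast 1=\delta$.

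Writing $M_U(s):=\sum_{m\leqslant U}\mu(m)m^{-s}$ and $L_V(s):=\sum_{n\leqslant V}\Lambda(n)n^{-s}$, I would first verify the identity
\[
-\frac{\zeta'(s)}{\zeta(s)}\;=\;M_U(s)\bigl(-\zeta'(s)\bigr)\;-\;\zeta(s)M_U(s)L_V(s)\;+\;L_V(s)\;+\;\bigl(1-\zeta(s)M_U(s)\bigr)\!\left(-\frac{\zeta'(s)}{\zeta(s)}-L_V(s)\right).
\]
Expanding the last bracket and using the obvious relation $(-\zeta'/\zeta)(\zeta M_U)=M_U(-\zeta')$ collapses the right-hand side to $-\zeta'/\zeta$, so this is a valid equality of formal Dirichlet series.

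Next I would extract the coefficient of $n^{-s}$ on both sides. The three ``named'' terms on the right contribute $\sum_{ad=n,\,a\leqslant U}\mu(a)\log d$, then $-\sum_{a\delta c=n,\,a\leqslant U,\,\delta\leqslant V}\mu(a)\Lambda(\delta)$, and finally $\Lambda(n)$ truncated to $n\leqslant V$; after multiplying by $e(n\alpha)$ and summing over $n\leqslant x$, these become exactly $S_{I,1}$, $-S_{I,2}$, and $S_{0}$. For the remaining term, the $n$th coefficient of $1-\zeta M_U$ equals $-\sum_{\delta\mid n,\,\delta\leqslant U}\mu(\delta)$ when $n>1$, and the M\"obius identity $\sum_{\delta\mid n}\mu(\delta)=[n=1]$ forces it to vanish whenever $n\leqslant U$. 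Hence this term contributes, at $n$, the value $-\sum_{kd=n,\,k>U,\,d>V}\Lambda(d)\sum_{\delta\mid k,\,\delta\leqslant U}\mu(\delta)$, which after the Fourier weight and summation is precisely $-S_{II}$.

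Combining the four contributions gives $S(x,\alpha)=S_{I,1}-S_{I,2}-S_{II}+S_0$, as required. There is no real obstacle in the argument; the one mildly delicate point is the bookkeeping in the $S_{II}$ term, where the constraint $k>U$ must be recognised as a free consequence of the M\"obius identity rather than an extra restriction to be imposed by hand.
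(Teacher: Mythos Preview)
Your argument is correct and is precisely the standard derivation of Vaughan's identity via the formal Dirichlet-series equation
\[
-\frac{\zeta'}{\zeta}=M_U(-\zeta')-\zeta M_U L_V+L_V+(1-\zeta M_U)\Bigl(-\frac{\zeta'}{\zeta}-L_V\Bigr),
\]
followed by coefficient extraction. The one point worth flagging is that the paper does not actually prove this lemma: it is quoted as a known identity (Vaughan's), and the surrounding discussion merely applies it. So there is no ``paper's proof'' to compare against; your write-up supplies exactly the argument one finds in Vaughan's monograph, and the bookkeeping you single out---that the restriction $k>U$ in $S_{II}$ comes for free from $\sum_{\delta\mid k}\mu(\delta)=0$ for $1<k\leqslant U$---is handled correctly.
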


We aim to perform our minor arc estimation by applying the following key lemma. 

\begin{lemma}\label{L4.4}  Assume that $1 \leqslant b_i \leqslant N$ and $b_i \in \mathbf{N}$ for each $i=1,\dots , m$. Let $|\alpha - \frac{a}{q}|\leqslant \frac{1}{q^2}$ for some $a,q\in \mathbf{N}$ such that $(a,q)=1$ and $q\leqslant N$. Hence,
\[ S_i(N, \alpha) \ll \log^4 N  \left(   \frac{N}{\sqrt{q} } +  N^\frac{4}{5}b_i^{\frac{1}{5}} + \sqrt{Nq} \right). \]
\end{lemma}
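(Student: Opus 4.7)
Since the lemma summarises the computations already carried out in this section, the plan is simply to assemble the ingredients at hand. Vaughan's identity (Lemma \ref{L4.2}), applied to $S(N/b_i, b_i \alpha) = S_i(N, \alpha)$ with $U = V = (N/b_i)^{2/5}$, gives the decomposition $S_i(N,\alpha) = S_{I,1} - S_{I,2} - S_{II} + S_0$; it then suffices to bound each of the four pieces by the right-hand side of the target estimate.

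The short sum $S_0 = \sum_{n\leqslant V}\Lambda(n)e(nb_i\alpha)$ is controlled trivially by Chebyshev's bound, giving $|S_0| \ll V \ll N^{2/5}$, which is absorbed into the $N^{4/5}$ term. The two Type I sums $S_{I,1}$ and $S_{I,2}$ are, by the manipulations of (\ref{4.2})--(\ref{4.3}), each bounded by $\log N$ times a sum of the form $\sum_{d \leqslant X b_i} \min\{N/d,\, 1/\|d\alpha\|_{\mathbf{R}/\mathbf{Z}}\}$ for $X\in\{U, UV\}$; a direct application of Lemma \ref{L4.1} to $\alpha$, whose Diophantine approximation is given by hypothesis, produces the bound displayed in (\ref{4.5}), namely $\log^2 N (N/q + N^{4/5} b_i^{1/5} + q)$.

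The Type II sum $S_{II}$ is treated by dyadic decomposition $\sum_{Z \in \mathcal{Y}} S(Z)$; Cauchy--Schwarz applied to each $S(Z)$ and opening the square reduces the estimate to another instance of Lemma \ref{L4.1}, yielding $|S(Z)|^2 \ll (N/b_i)\log^6 N (N/q + N/Z + q + Z)$ as in (\ref{4.4}). Taking square roots and summing the $O(\log N)$ dyadic levels (where the dominant contributions come from the extremal values of $Z$, namely $Z \asymp U$ and $Z \asymp N/(b_i V)$) then yields $S_{II} \ll (1/\sqrt{b_i})\log^4 N (N/\sqrt{q} + N^{4/5} + \sqrt{Nq})$.

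Combining the four bounds and using $q \leqslant N$ (so that $N/q \leqslant N/\sqrt{q}$ and $q \leqslant \sqrt{Nq}$) together with $b_i \geqslant 1$ to discard the harmless factor $1/\sqrt{b_i}$, the total collapses to the claimed form. The main technical nuisance is the clean absorption of the auxiliary $b_i$-dependent factors appearing in intermediate bounds, notably the $b_i^{1/5}$ in the Type I estimate; here it is implicitly used that the relevant powers of $b_i$ are dominated by the implicit constants in the $\ll$-notation (or, in the context in which the lemma is later applied, by the smallness of $\delta$ in the hypothesis $b_i \leqslant N^\delta$ of Theorem \ref{T3.1}).
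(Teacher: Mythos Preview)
Your proof is correct and follows exactly the route taken in the paper: the one-line proof there simply points back to the bounds on $S_{I,1}$, $S_{I,2}$, $S_{II}$ derived in (\ref{4.2})--(\ref{4.5}) together with $S_0 \ll N^{2/5}$, and your write-up is a faithful expansion of that. Your closing remark about the stray $b_i^{1/5}$ factor from the Type~I estimate is well observed; the paper's statement of the lemma is slightly imprecise on this point, but as you note it is harmless in the only place the lemma is used (Lemma~\ref{4.5}, where $b_i \leqslant N^{1/36}$).
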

\begin{proof}
We follow the reasoning described in \cite[Chapter 3]{Vaughan}. Let us apply Lemma \ref{4.2} with $x:=\frac{N}{b_i}$ and $b_i \alpha$ instead of $\alpha$. 

The inner sum in $S_{I,1}$ equals
\[
\sum_{n \leqslant \frac{N}{d b_i }} e(ndb_i \alpha ) \int \limits_1^n \frac{dy}{y} = \int \limits_1^{\frac{N}{d b_i }} \sum_{n \leqslant \frac{N}{d b_i }} e(nd b_i \alpha ) \mathbf{1}_{y < n} \frac{dy}{y} =
\int \limits_1^{\frac{N}{d b_i}} \sum_{y < n \leqslant \frac{N}{d b_i }} e(nd b_i \alpha ) \frac{dy}{y},
\]
which gives 

\begin{equation}\label{4.2}
 S_{I,1} \ll \log N \sum_{d \leqslant U} \min \left\{ \frac{N}{d b_i} , \frac{1}{\| d b_i \alpha  \|_{\mathbf{R}/\mathbf{Z}}} \right\} =
  \log N \sum_{\substack{ d \leqslant Ub_i \\ b_i|d }} \min \left\{ \frac{N}{d} , \frac{1}{\|  d \alpha  \|_{\mathbf{R}/\mathbf{Z}}} \right\}.
\end{equation}
Furthermore, one can see that

\begin{equation}\label{4.3}
\begin{gathered}
S_{I,2} = \sum_{\delta_1 \leqslant U, \delta_2 \leqslant V} \sum_{d \leqslant \frac{N}{\delta_1 \delta_2 b_i}} \mu (\delta_1 ) \Lambda (\delta_2) \, e(\delta_1 \delta_2 d b_i \alpha)   \\
=  \sum_{\delta_1 \leqslant U, \delta_2 \leqslant V} \sum_{d \leqslant \frac{N}{\delta_1 \delta_2 b_i}} \sum_{\substack{ n \leqslant UV \\ \delta_1 \delta_2 = n}} \mu (\delta_1 ) \Lambda (\delta_2) e(n d b_i \alpha)  
~=  \sum_{ \substack {\delta_1 \leqslant U, \delta_2 \leqslant V, d, n \leqslant UV \\ \delta_1 \delta_2 = n, ~dn \leqslant 
\frac{N}{b_i}  }} \mu (\delta_1 ) \Lambda (\delta_2) e(n d b_i \alpha)   \\
 = \sum_{n \leqslant UV }  \left( \sum_{ \substack {\delta_1 \leqslant U, \delta_2 \leqslant V \\ \delta_1 \delta_2 = n }} \mu(\delta_1 ) \Lambda (\delta_2 ) \right) \sum_{d \leqslant \frac{N}{nb_i}} e(n d b_i \alpha) ~\ll~
\log (UV) \sum_{n \leqslant UV }  \min \left\{ \frac{N}{nb_i} , \frac{1}{ \| nb_i \alpha \|_{\mathbf{R}/\mathbf{Z}}  } \right\}  \\
=~  \log (UV) \sum_{\substack{ d \leqslant UVb_i \\ b_i | d }}  \min \left\{ \frac{N}{d} , \frac{1}{ \|  d \alpha \|_{\mathbf{R}/\mathbf{Z}} } \right\}. 
\end{gathered}
\end{equation}
To estimate the value of $S_{II}$ let us define the set
 \[\mathcal{Y} := \{ U, 2U, 4U, \dots  , 2^kU : 2^k UV < N/b_i \leqslant 2^{k+1} UV \}. \]
Thus,
\[ S_{II} = \sum_{Z \in \mathcal{Y} } \mathcal{S} (Z), \]
where
\[ \mathcal{S} (Z) := \sum_{Z < d \leqslant 2Z}    \left( \sum_{\substack{ \delta \leqslant U \\ \delta | d }} \mu (\delta ) \right) \sum_{V < n \leqslant \frac{N}{db_i}}\Lambda (n) e(nd b_i \alpha). \]
By the Cauchy--Schwarz inequality
\[ |\mathcal{S} (Z)|^2 \leqslant \left( \sum_{Z < e \leqslant 2Z} \tau (e)^2 \right) \sum_{Z < d \leqslant 2Z} \left|  \sum_{V < n \leqslant \frac{N}{db_i}}\Lambda (n) e(nd b_i \alpha) \right|^2. \]
From Lemma \ref{tau_sq} we get
\begin{equation}\label{4.4}
\begin{aligned}
 |\mathcal{S} (Z)|^2 &\ll Z \log^3 N    \sum_{V < n_1, n_2 \leqslant \frac{N}{Zb_i}} \Lambda (n_1) \Lambda (n_2)
 \sum_{Z < d \leqslant 2Z}  e((n_1 - n_2)d b_i \alpha) \\
&\ll   Z \log^5 N    \sum_{ n_1, n_2 \leqslant \frac{N}{Zb_i}} \min  \left\{  Z ,  \frac{1}{ \| (n_1-n_2)b_i \alpha \|_{\mathbf{R}/\mathbf{Z}} }   \right\}  \\
&=    Z \log^5 N  \sum_{ \substack{ n \leqslant \frac{N}{Z} \\ b_i|n}}   \sum_{ \substack{ -n \leqslant d \leqslant \frac{N}{Z} - n \\ b_i|d}}    
\min  \left\{  Z ,  \frac{1}{ \|  d \alpha \|_{\mathbf{R}/\mathbf{Z}} }   \right\} \\
&\ll \frac{N}{b_i} \log^5 N  \left( Z+  \sum_{ \substack{  d \leqslant \frac{N}{Z} \\ b_i|d}}    
\min  \left\{  \frac{N}{d} ,  \frac{1}{ \|  d \alpha \|_{\mathbf{R}/\mathbf{Z}} }   \right\} \right),
\end{aligned}
\end{equation}

Let us combine (\ref{4.2})--(\ref{4.4}) and put $U,V:= \left( \frac{N}{b_i} \right)^\frac{2}{5}$. Assume that $|\alpha - \frac{a}{q}|\leqslant \frac{1}{q^2}$ for some $a,q\in \mathbf{N}$ such that $(a,q)=1$, $q\leqslant N$. Applying Lemma \ref{L4.1} one gets
\begin{equation}\label{sumki_minor}
\begin{aligned}
 \sum_{d \leqslant Ub_i  } \min \left\{ \frac{N}{d} , \frac{1}{\| d \alpha  \|_{\mathbf{R}/\mathbf{Z}}} \right\} &\ll
  \left(\frac{N}{q} + N^\frac{4}{5} b_i^\frac{1}{5} + q \right) \log N, \\
   \sum_{n \leqslant UVb_i }  \min \left\{ \frac{N}{n} , \frac{1}{ \| n \alpha \|_{\mathbf{R}/\mathbf{Z}} } \right\} &\ll \left(\frac{N}{q} + N^\frac{4}{5} b_i^\frac{1}{5} + q \right) \log N,  \\
 Z+  \sum_{ \substack{  d \leqslant \frac{N}{Z}}}\min  \left\{  \frac{N}{d} ,  \frac{1}{ \| \, d \alpha \|_{\mathbf{R}/\mathbf{Z}} }   \right\}  &\ll \left(  \frac{N}{q} + \frac{N}{Z} + q + Z \right) \log N.
\end{aligned}
\end{equation}
Note that the restricions $b_i|d$ under summands from (\ref{4.2}), (\ref{4.3}), and (\ref{4.4}) were simply discarded, so there might be some room for improvement here. Now, (\ref{4.2})--(\ref{sumki_minor}) imply
\begin{equation}\label{4.5_wniosek}
\begin{aligned}
S_{I,1} &\ll  \left(\frac{N}{q} + N^\frac{4}{5} b_i^\frac{1}{5} + q \right) \log^2 N, \\
S_{I,2} &\ll \left(\frac{N}{q} + N^\frac{4}{5} b_i^\frac{1}{5} + q \right) \log^2 N.
\end{aligned}
\end{equation}
 Since $| \mathcal{Y} | \ll \log N$ and 
\[ |\mathcal{S}(Z)|^2 \ll \frac{N}{b_i} \log^6 N \left(  \frac{N}{q} + \frac{N}{Z} + q + Z \right),\]
one also gets
 \begin{equation}\label{S_2}
 S_{II} \ll  \frac{ \log^3 N}{\sqrt{b_i}}  \sum_{Z \in \mathcal{Y}}  \left(   \frac{N}{\sqrt{q} } +  \frac{N}{\sqrt{Z} } + \sqrt{Nq} + \sqrt{ NZ} \right) \ll   \frac{ \log^4 N }{\sqrt{b_i}}  \left(   \frac{N}{\sqrt{q} } +  N^\frac{4}{5} b_i^\frac{1}{5} + \sqrt{Nq} \right).\
 \end{equation}
 
 The claim follows from (\ref{4.5_wniosek})--(\ref{S_2}) merged with the inequality $S_0 \ll N^{\frac{2}{5}}$.
 \end{proof}

Recall that $Q:= \log^B N$, and that $\alpha \in \mathfrak{m}$ means that $\alpha$ does not satisfy the approximation
\[ \left| \alpha - \frac{a}{q} \right|  \leqslant \frac{Q}{N}  \]
for any $1 \leqslant q \leqslant Q$ and any integer $a$ coprime to $q$. We present the following lemma.

\begin{lemma}\label{L4.5}
Fix real $B, \varepsilon >0$. Let $b_i \leqslant N^{1 - \varepsilon}$ and $b_i \in \mathbf{N}$ for each $i=1,\dots , m$. Hence, for any $\alpha \in \mathfrak{m}$ we have
\[ S_i (N, \alpha) \ll \frac{N}{\log^{\frac{B}{2} -4} N}.\]
\end{lemma}

\begin{proof} By the Dirichlet's approximation theorem, for any $\alpha \in \mathbf{R}$ there exists a positive integer $q \leqslant \frac{N}{Q}$ such that $|\alpha - \frac{a}{q}|\leqslant \frac{Q}{qN} \leqslant \frac{1}{q^2}$ for some $a \in \mathbf{N}$ satisfying the condition $(a,q)=1$. On the other hand, it has to be $q>Q$, because otherwise $\alpha \in \mathcal{M}$ would follow. Hence, by Lemma \ref{L4.4} the claim is true.
\end{proof}

Now, we summarize the section by proving the following result.

\begin{lemma}\label{L4.6}
Fix a real $B>0$. Under the assumptions of Theorem \ref{Valet} and the extra assumption $\eta_1, \eta_2, \eta_3 =1$ we have
\[  \int \limits_\mathfrak{m} \prod_{i=1}^{m} S_i(N,\alpha) e(-N\alpha) \, d\alpha \ll \frac{1}{b_1\cdots b_m}\frac{N^{m-1}}{\log^{\frac{B}{2}-3}N}.\]
\end{lemma}

\begin{proof}
We have
\begin{equation}
\begin{gathered}
\int \limits_\mathfrak{m} \prod_{i=1}^{m} S_i(N,\alpha) e(-N\alpha) \, d\alpha \leqslant \prod_{i=3}^m \max_{\alpha \in \mathfrak{m}} |S_i (N,\alpha)| \int \limits_{\mathbf{R}/\mathbf{Z}}| S_1 (N, \alpha) S_2 (N,\alpha)| \, d \alpha\\
 \leqslant  \frac{N^{m-3}}{b_4 \cdots b_m} \left( \max_{\alpha \in \mathfrak{m}} |S_3 (N,\alpha)| \right) \left( \, \int \limits_{\mathbf{R}/\mathbf{Z}}| S_1 (N, \alpha)|^2 \, d \alpha \right)^{1/2} \left(
\, \int \limits_{\mathbf{R}/\mathbf{Z}}| S_2 (N,\alpha)|^2 \, d \alpha \right)^{1/2} \\
\ll \frac{N^{m-3}}{b_4 \cdots b_m} \frac{N}{\log^{\frac{B}{2} -4} N} \frac{N}{\sqrt{b_1b_2}} \sqrt{ \log (N/b_1) \log (N/b_2)} \ll
\frac{N^{m-1}}{b_1 \cdots b_m} \frac{1}{\log^{\frac{B}{2} - 3} N},
\end{gathered}
\end{equation}
which follows from the Cauchy--Schwarz inequality, the Plancherel identity, the basic estimation $\sum_{n\leqslant x} \Lambda (n)^2 \ll x \log x$ valid for $x\geqslant 1$, Lemma \ref{L4.5} and the fact that $b_1,b_2,b_3$ are considered fixed.
\end{proof}
We combine the results from the last two sections and establish the following result.

\begin{theorem}\label{T4.7} Fix a real $B>6$ and $c_1,\dots ,c_m \in \mathbf{N}$. Let $b_i:=c_i \eta_i$ for $i=1,\dots ,m$, where each $\eta_i$ is some positive integer with prime divisors greater than $Q$. Let us further assume that $(b_1,\dots ,b_m)=1$, that $b_1, \dots  ,b_m \leqslant N^{\delta}$ for some $\delta \in (0,\frac{5}{12m(2m-1)})$, and that $\eta_1,\eta_2,\eta_3=1$. Hence, 
\[ \sum_{   \substack{ n_1, \dots ,n_m \leqslant N \\ b_1 n_1+ \dots + b_m n_m = N }} \Lambda (n_1) \cdots \Lambda (n_{m}) = \frac{1}{(m-1)!}\frac{N^{m-1}}{b_1 \cdots b_m} \mathfrak{S}_{c_1, \dots ,c_m}(N) + O\left( \frac{N^{m-1}}{b_1\cdots b_m \log^A N} \right), \]
where $A=(B-6)/2$.
\end{theorem}

\begin{proof}
Follows from Theorem \ref{Valet} and Lemma \ref{L4.6} upon taking $\varepsilon = \frac{1}{2}$.
\end{proof}

\[ \]
\section{Reducing the logarithmic weights}\label{reducing}

Firstly, for each $k>1$ we would like to discard the contribution of the numbers of the form $p^k$ from the expression appearing in the left-hand side of the equation from the statement of Theorem \ref{T4.7}. For the sake of convenience, let us introduce the following notation:
\begin{equation}
\begin{aligned}
\sideset{}{^\flat}\sum_{n_1, \dots, n_m}  := \sum_{ \substack { n_1, \dots, n_m \text{ being prime}\\ b_1 n_1 + \dots + b_mn_m=N  }},
\end{aligned}
\end{equation}
so for instance 
\begin{multline}
 \mathfrak{r}_m(N;b_1,\dots ,b_m) \\ :=\sum_{n_1 \leqslant \frac{N}{b_1}, \dots , n_m \leqslant \frac{N}{b_m} } \mathbf{1}_{b_1 n_1 + \dots + b_mn_m=N} \mathbf{1}_{n_1, \dots ,n_m \text{ are prime}} ~= \sideset{}{^\flat}\sum_{n_1 \leqslant \frac{N}{b_1}, \dots , n_m \leqslant \frac{N}{b_m} } 1.
\end{multline}
Define 

\begin{equation}\label{5.1}
 \theta (n) =
\begin{cases}
	\log n, & \text{if $n$ is prime}\\
	0, & \text{otherwise}\\
  \end{cases}.
\end{equation}
\\
Hence, for some $A>0$ one has

\begin{align}\label{5.2}
&  \sum_{   \substack{ n_1, \dots,n_m \leqslant N \\ b_1 n_1+ \dots+ b_m n_m = N }} \Lambda (n_1) \cdots \Lambda (n_{m}) ~- 
 \sum_{   \substack{ n_1, \dots ,n_m \leqslant N \\ b_1 n_1+ \dots + b_m n_m = N }} \theta (n_1) \cdots \theta (n_{m})   \\
\leqslant& ~ \sum_{i=1}^m \sum_{   \substack{ n_1, \dots ,n_m \leqslant N \\ b_1 n_1+ \dots+ b_m n_m = N \\  n_i=p^k \text{ for } k>1 }} \Lambda (n_1) \cdots \Lambda (n_{m}) ~\leqslant ~  \log^{m} N \sum_{i=1}^m \sum_{   k_1 \leqslant \sqrt{N}, k_2, \dots ,k_{m-1}\leqslant N } 1 \\
\leqslant&~ N^{m-\frac{3}{2}}\log^m N ~ \leqslant  ~  \frac{N^{m-\frac{3}{2}+\delta m}\log^m N}{b_1 \cdots b_m}.
 \end{align}
 We are going to study the asymptotic behaviour of the function

\[ \mathfrak{r}_m(N;b_1,\dots ,b_m) := \sideset{}{^\flat}\sum_{  n_1\leqslant \frac{N}{b_1},\dots ,n_m \leqslant \frac{N}{b_m} } 1. \] 
Define also

\[ \widetilde{\mathcal{R}_m} (N;b_1, \dots ,b_m) :=  \sideset{}{^\flat}\sum_{  n_1\leqslant \frac{N}{b_1},\dots ,n_m \leqslant \frac{N}{b_m} }  \theta (n_1) \cdots \theta (n_{m}). \]

Let us prove the following result. 

\begin{lemma}\label{T5.1}
 Under the assumptions of Theorem \ref{T4.7} we have
\[ \mathfrak{r}_m (N;b_1,\dots ,b_m) = 
 \frac{1}{(m-1)!}\frac{1}{b_1\cdots b_m}\frac{N^{m-1}}{  \prod_{i=1}^m \log \frac{N}{b_i}} (\mathfrak{S}_{c_1,\dots ,c_m}(N)+o(1)). \]
\end{lemma}

\begin{proof}
Obviously $\widetilde{\mathcal{R}_m}=0$ if and only if $\mathfrak{r}_m=0$ and in such a case theorem follows trivially. Note that
\begin{equation}\label{5.3}
 \widetilde{\mathcal{R}_m} (N;b_1, \dots ,b_m) \leqslant  \mathfrak{r}_m (N;b_1, \dots ,b_m) \prod_{i=1}^m \log \frac{N}{b_i}.
\end{equation}
On the other hand, for every $\epsilon >0$ one has

\begin{multline}\label{5.4}
 \widetilde{ \mathcal{R}_m} (N;b_1, \dots ,b_m) \geqslant
 \sideset{}{^\flat}\sum_{ \left( \frac{N}{b_i} \right)^{1 -\epsilon} < n_i \leqslant \frac{N}{b_i}: \, 1\leqslant  i \leqslant m }   \theta (n_1) \cdots \theta (n_{m})  \\
\geqslant     (1- \epsilon)^m \left( \prod_{i=1}^m \log \frac{N}{b_i} \right) \sideset{}{^\flat}\sum_{ \left( \frac{N}{b_i} \right)^{1 -\epsilon} < n_i \leqslant \frac{N}{b_i}: \, 1\leqslant  i \leqslant m } 1.
\end{multline}   
From Lemma \ref{LA.2} the last sum in (\ref{5.4}) differs from $\mathfrak{r}_m$ by at most

\begin{multline}\label{szacowanie_kombi}
 \ll ~~ \sum_{j=1}^m \sum_{ \substack{ n_i \leqslant \frac{N}{b_i}: \, 1\leqslant  i \leqslant m \\ n_j \leqslant \left( \frac{N}{b_j} \right)^{1 -\epsilon}\\ b_1 n_1+ \dots + b_m n_m = N  } } 1 ~~ = ~~ \sum_{j=1}^m ~\sum_{n_j \leqslant \left( \frac{N}{b_j} \right)^{1 - \epsilon} } J_{b_1, \dots , b_{j-1}, b_{j+1} , \dots , b_m } (N - b_j n_j)    \\
\ll ~~ N^{m-2} \,
\sum_{j=1}^m ~\sum_{n_j \leqslant \left( \frac{N}{b_j} \right)^{1 - \epsilon} } \frac{1}{b_1 \cdots b_{j-1} b_{j+1} \cdots b_m}
~~ \ll ~~  \frac{N^{m-1-(1-\delta)\epsilon}}{b_1 \dots b_m}.
\end{multline} 

Now, we combine (\ref{5.3})--(\ref{szacowanie_kombi}) and we let $\epsilon$ very slowly approach $0$. In consequence
\begin{equation}
 \widetilde{ \mathcal{R}_m} (N;b_1, \dots ,b_m)  = (1 + o(1)) \left(  \mathfrak{r}_m (N;b_1, \dots ,b_m)\prod_{i=1}^m \log \frac{N}{b_i} \right) ~+~ O \left(  \frac{N^{m-1- \frac{1}{2}(1-\delta)\epsilon}}{b_1 \cdots b_m} \right).
\end{equation}
Combining this with Theorem \ref{T4.7} and (\ref{5.2}), we complete the proof. 
\end{proof}


\[ \]
\section{Cutting off}\label{CuttingOff}

Recall that $(c_1,\dots ,c_m)=1$. Let $N$ be large enough so that $Q:=\log^B N>\max \{c_1,\dots ,c_m\}$. In this section we finish the proof of Theorem \ref{MAIN} by making the key transition from primes to almost primes. Firstly, let us introduce the following simplifications. From this point forward we put 
\[ n_j := n_{j}^{(1)} \cdots n_{j}^{(r_j)}  ~~\,~~\,~~\text{and}~~\,~~\,~~ \eta_j := n_{j}^{(2)} \cdots n_{j}^{(r_j)}\]
 for all $j=1, \dots ,m$. We have to calculate the following expression:
\[ \sideset{}{^\natural}\sum_{  n_j^{(i)} \leqslant \frac{N}{c_j} \colon 1 \leqslant j \leqslant m, \, 1\leqslant i \leqslant r_j } 1. \]
Notice that for each choice of $n_j$ only one of the $n_j^{(i)}$ can be larger than $\sqrt{N}$. Therefore, our problem is equivalent (up to some overlapping cases when every of the $n_j^{(i)}$ is smaller than $\sqrt{N}$; the appropriate error term is calculated in (\ref{appro_error})) to calculating yet another expression:

\begin{equation}\label{6.1}
\sum_{ \substack{ n_1^{(1)}\leqslant \frac{N}{c_1}, \dots ,n_m^{(1)} \leqslant \frac{N}{c_m} \\ n_j^{(i)} \leqslant \sqrt{N}: \, 1 \leqslant j \leqslant m, \, 2\leqslant i \leqslant r_j  \\ \text{all of the } n_j^{(i)} \text{ being prime} }  }
 \mathbf{1}_{c_1 n_1 +\dots+c_m n_m =N}.
\end{equation}

We also adopt two new notions: 
\begin{equation}
\begin{aligned}
\sideset{}{^\natural}\sum_{ n_j^{(i)}: \, 1 \leqslant j \leqslant m, \, 1\leqslant i \leqslant r_j}  &:= 
\sum_{ \substack {  n_j^{(i)}: \, 1 \leqslant j \leqslant m, \, 1\leqslant i \leqslant r_j \\
\text{all of the } n_j^{(i)} \text{ being prime}\\ c_1 n_1 + \dots + c_mn_m=N  }} , \\
\sideset{}{'}\sum_{\text{some conditions}} ~~\,~~&:= \sum_{ \substack{ \text{some conditions} \\ \text{only prime indices} }}.
\end{aligned}
\end{equation}

The general strategy is to show that even after attaching some stronger conditions to the sum (\ref{6.1}), its asymptotic behaviour remains unchanged. Let $r:=\max \{ r_1,\dots ,r_m \}$. For $1 \leqslant i \leqslant m$ and $2 \leqslant j \leqslant r_i$ the range of indices $n_j^{(i)}$ from (\ref{6.1}) shall be cut from $[ 1 , \sqrt{N} ] \cap \mathbf{N}$ to $[ Q , N^{\gamma} ] \cap \mathbf{N}$ for some real $0 < \gamma < 1/2$. These further restrictions do not hurt the asymptotic behaviour of expression (\ref{6.1}), and simultaneously make it calculable via Lemma \ref{T5.1}. 

Let us focus on the details. Put $\gamma := \frac{1}{12rm(2m-1)}$. The expression (\ref{6.1}) can be decomposed as

\begin{equation}\label{6.2}
 {\sum}^{\dagger} + O \left( \sum_{\ell=1}^m \sum_{k=2}^{r_\ell} \left( {\sum}_1^{(\ell,k)} +  {\sum}_2^{(\ell,k)} \right) + {\sum}_3 \right), 
\end{equation}
where 
\begin{equation}
\begin{aligned}
{\sum}^{\dagger} &:=
 \sideset{}{^\natural}\sum_{ \substack{  n_1^{(1)}\leqslant \frac{N}{c_1}, \dots ,n_m^{(1)} \leqslant \frac{N}{c_m}  \\ Q < n_j^{(i)} \leqslant N^{\gamma}: \, 1 \leqslant j \leqslant m, \, 2\leqslant i \leqslant r_j \\  (\eta_1,\dots ,\eta_m)= 1}  } 1,
\\
  {\sum}_1^{(\ell,k)} &:=
\sideset{}{^\natural}\sum_{ \substack{ n_1^{(1)}\leqslant \frac{N}{c_1}, \dots ,n_m^{(1)} \leqslant \frac{N}{c_m} \\ n_j^{(i)} \leqslant \sqrt{N}: \, 1 \leqslant j \leqslant m, \, 2\leqslant i \leqslant r_j \\ n_{\ell}^{(k)} > N^{\gamma}   }}
1 ,  
\\
{\sum}_2^{(\ell,k)} &:=
\sideset{}{^\natural}\sum_{ \substack{ n_1^{(1)}\leqslant \frac{N}{c_1}, \dots ,n_m^{(1)} \leqslant \frac{N}{c_m} \\ n_j^{(i)} \leqslant N^{\gamma} : \, 1 \leqslant j \leqslant m, \, 2\leqslant i \leqslant r_j \\  n_{\ell}^{(k)} \leqslant Q  }}
 1,  
\\ 
{\sum}_3 &:=
\sideset{}{^\natural}\sum_{ \substack{ n_1^{(1)}\leqslant \frac{N}{c_1}, \dots ,n_m^{(1)} \leqslant \frac{N}{c_m} \\ Q < n_j^{(i)} \leqslant N^{\gamma}: \, 1 \leqslant j \leqslant m, \, 2\leqslant i \leqslant r_j \\  (\eta_1,\dots ,\eta_m) > 1}  }
 1.
\end{aligned}
\end{equation}
The assumption $\eta_1, \eta_2, \eta_3=1$ instantly gives
\[ {\sum}_3=0.  \]

\[ \]
\subsection{Estimating $ {\sum}^{\dagger}$} 

We can rewrite the studied sum in the following form:

\[  {\sum}^{\dagger} =  \sideset{}{'}\sum_{\substack{Q < n_j^{(i)} \leqslant N^{\gamma}: \\ 1 \leqslant j \leqslant m,~ 2\leqslant i \leqslant r_j  \\  (\eta_1, \dots ,\eta_m) = 1 } } \left(  \sideset{}{'}\sum_{ \substack{ n_1^{(1)}\leqslant \frac{N}{c_1 \eta_1}, \dots ,n_m^{(1)} \leqslant \frac{N}{c_m \eta_m}}  }
 \mathbf{1}_{c_1 \eta_1 n_{1}^{(1)}+\dots +c_m \eta_m n_{m}^{(1)} =N}  \right).
  \]
We have $\eta_j \leqslant N^{(r-1) \gamma }$ for all $j=1, \dots ,m$. For $N$ sufficiently large this gives $c_j \eta_j \leqslant N^{r \gamma}$. The sum in the parenthesis equals $\mathfrak{r}_m (N;c_1 \eta_1, \dots  , c_m \eta_m)$. Hence, the assumption $\eta_1,\eta_2,\eta_3=1$ and Lemma \ref{T5.1} imply that we can transform the expression above into:

\begin{multline}\label{6.3}
\frac{1}{(m-1)!} \frac{N^{m-1}}{c_1 \dots c_m} (\mathfrak{S}_{c_1, \dots ,c_m}(N)+o(1)) ~\times \\
 \sideset{}{'}\sum_{ Q < n_j^{(i)} \leqslant N^{\gamma}: \, 1 \leqslant j \leqslant m, \, 2\leqslant i \leqslant r_j} \frac{1}{\eta_1\dots \eta_m}\frac{1}{  \log \frac{N}{c_1 \eta_1} \dots \log \frac{N}{c_m \eta_m}} \mathbf{1}_{ (\eta_1, \dots ,\eta_m)=1} .
 \end{multline}
Given that $\mathbf{1}_{ (\eta_1, \dots ,\eta_m)=1} = 1$, we calculate
\begin{multline}\label{indukcja}
\prod_{j=1}^m \,  \sideset{}{'}\sum_{ Q < n_j^{(i)} \leqslant N^{\gamma}: \, 2\leqslant i \leqslant r_j}
\frac{1}{  \eta_j \log \frac{N}{c_j \eta_j}} ~=~ \\
\prod_{j=1}^m  \, \sideset{}{'}\sum_{ Q < n_{j}^{(r_j)} \leqslant N^{\gamma}} 
 \left( \dots \left( \sideset{}{'}\sum_{ Q < n_j^{(3)} \leqslant N^{\gamma}} \left(   \sideset{}{'}\sum_{ Q < n_j^{(2)} \leqslant N^{\gamma}}
\frac{1}{ n_j^{(2)}   \log \frac{N}{c_j \eta_j}} 
 \right) \frac{1}{n_j^{(3)}} \right) \dots \right)  \frac{1}{n_{j}^{(r_j)}}.
\end{multline}
From $ \frac{N}{c_j \eta_j}< N^{1 - r\gamma }$ for sufficiently large $N$ and from Lemma \ref{LB.1} applied $r_j-1$ times we conclude that the expression (\ref{indukcja}) equals

\begin{equation}\label{6.4}
 (1+o(1)) \prod_{j=1}^m  \frac{ (\log \log N)^{r_j -1}}{\log ( N/ c_j ) } = 
 (1+o(1)) \frac{ (\log \log N)^{r_1+\dots +r_m-m }}{ \log^m N }.
\end{equation}

Combining (\ref{6.3})--(\ref{6.4}) we arrive at

\begin{equation}\label{6.7}
 {\sum}^{\dagger} = \frac{1}{(m-1)!} \frac{N^{m-1}}{c_1\dots c_m} \frac{ (\log \log N)^{r_1+\dots +r_m-m }}{ \log^m N }
( \mathfrak{S}_{c_1,\dots ,c_m}(N) +o(1)).
\end{equation}

\[ \]
\subsection{Upper bounds on $ {\sum}_1^{(\ell,k)}$ and $ {\sum}_2^{(\ell,k)}$}\label{SS6.3} 

 We deal with these two sums in exactly the same way, so only the calculations for the first one are presented in detail. We can assume that $\ell \geqslant 4$ without loss of generality. 

The first sum can be rewritten in the following manner
\begin{multline}\label{hehe1}
 \sideset{}{^\natural}\sum_{ \substack{ n_1^{(1)}\leqslant \frac{N}{c_1}, \dots ,n_m^{(1)} \leqslant \frac{N}{c_m} \\ n_j^{(i)} \leqslant \sqrt{N}: \, 1 \leqslant j \leqslant m, \, 2\leqslant i \leqslant r_j \\ n_{\ell}^{(k)} > N^{\gamma}   }}1 =  \\
  \sideset{}{'}\sum_{ \substack{  n_4^{(1)}\leqslant \frac{N}{c_4}, \dots , n_m^{(1)}\leqslant \frac{N}{c_m}  \\
 n_j^{(i)} \leqslant \sqrt{N}: \, 1 \leqslant j \leqslant m, \, 2\leqslant i \leqslant r_j \\ \forall_j ~ c_j\eta_j \leqslant \frac{N}{2} \\ n_{\ell}^{(k)} > N^{\gamma}   }}
\sideset{}{'}\sum_{ n_1^{(1)}\leqslant \frac{N}{c_1},  n_2^{(1)}\leqslant \frac{N}{c_2} , n_3^{(1)}\leqslant \frac{N}{c_m} }
  \mathbf{1}_{{c_1 \eta_1 n_1^{(1)} }+ \dots + c_3 \eta_3 n_3^{(1)} =N -
  c_4 \eta_4 n_4^{(1)} - \dots- c_m \eta_m n_m^{(1)} }.
 \end{multline}
From  $\eta_1=\eta_2=\eta_3 =1$ and  Lemma \ref{L6.5} the inner sum from (\ref{hehe1}) equals
\[ \mathfrak{r}_m( N -
  c_4 \eta_4 n_4^{(1)} -\dots - c_m \eta_m n_m^{(1)}
   ; c_1 , c_2 , c_3 ) \ll \frac{N^2}{ \log^3 N }. \]
Thus, we obtain
\begin{multline}
  {\sum}_1^{(\ell,k)} \ll \frac{N^2}{ \log^3 N } \sideset{}{'}\sum_{ \substack{  n_4^{(1)}\leqslant \frac{N}{c_4}, \dots , n_m^{(1)}\leqslant \frac{N}{c_m} \\
n_j^{(i)} \leqslant \sqrt{N} \colon 4 \leqslant j \leqslant m, \, 2\leqslant i \leqslant r_j \\ \forall_j ~ n_j \leqslant N \\ n_{\ell}^{(k)} > N^{\gamma}   }} 
 1 \ll  
   \\
    \frac{N^2}{ \log^3 N } 
  \sum_{ \substack{  n_j \leqslant N:\\ 4 \leqslant j \leqslant m, j \not= \ell   }} 
 \left(  \prod_{\substack{ l=4 \\ l \not= \ell}}^m \mathbf{1}_{\Omega (n_l) = r_l} \right)
   \sideset{}{'}\sum_{\substack{ n_\ell^{(1)}, \dots , n_\ell^{(r_{\ell})} \leqslant N \\ n_\ell \leqslant N \\  N^{\gamma} < n_{\ell}^{(k)} \leqslant \sqrt{N} }} 1.
\end{multline}
From Theorem \ref{Landau} we have

\begin{multline} \sideset{}{'}\sum_{\substack{ n_\ell^{(1)}, \dots , n_\ell^{(r_{\ell})} \leqslant N \\ n_\ell \leqslant N \\  N^{\gamma} < n_{\ell}^{(k)} \leqslant \sqrt{N} }} 1 ~\ll~ \sideset{}{'}\sum_{  N^{\gamma} < n \leqslant \sqrt{N} }
~ \sum_{ h \leqslant \frac{N}{  n }  } \mathbf{1}_{\Omega (h) = r_\ell -1} 
\\
\ll ~ \frac{ N (\log \log N )^{r_\ell -2}}{\log N} \sideset{}{'}\sum_{  N^{\gamma} < n \leqslant \sqrt{N} }
\frac{  1 }{ n  } ~\ll~  \frac{ N (\log \log N )^{r_\ell -2}}{\log N},
\end{multline}
and for $ j \not= \ell$ we have
\[  \sum_{  n_j\leqslant N } 
  \mathbf{1}_{\Omega (n_j) = r_j} \ll \frac{N (\log \log N)^{r_j -1}}{\log N}, \]
  which gives
\begin{equation}
  {\sum}_1^{(\ell,k)} \ll  \frac{ N^{m-1} (\log \log N)^{r_1+\dots +r_m -m-1} }{\log^m N}. 
  \end{equation}
Analogously, we can obtain the following upper bound for the second sum
\[  {\sum}_2^{(\ell,k)} \ll \frac{ N^{m-1} (\log \log N)^{r_1+\dots +r_m -m-1}(\log \log \log N) }{\log^m N}. \]
The $\log \log \log N$ term appears because this time $n_\ell^{(k)}$ is an index supported on $[1,Q] \cap \mathbf{Z}$ instead of $(N^\gamma ,N^{1/2}] \cap \mathbf{Z}$ like in the first case; we easily get

\[  \sideset{}{'}\sum_{ n_{\ell}^{(k)} \leqslant Q }
\frac{  1 }{ n_{\ell}^{(k)}  } \ll \log \log Q \ll \log \log \log N.\]
\[ \]

\subsection{Proof of Theorem \ref{MAIN}} 

Now, we can summarize the discussion from this section. Under the assumptions of Theorem \ref{MAIN} we have
 
\begin{equation}\label{6.18}
\sideset{}{^\natural}\sum_{ \substack{ n_1^{(1)}\leqslant \frac{N}{c_1}, \dots ,n_m^{(1)} \leqslant \frac{N}{c_m} \\ n_j^{(i)} \leqslant \sqrt{N}: \, 1 \leqslant j \leqslant m, \, 2\leqslant i \leqslant r_j }  }1 ~=~  \frac{1}{(m-1)!} \frac{N^{m-1}}{c_1\dots c_m} \frac{ (\log \log N)^{r_1+\dots +r_m-m }}{ \log^m N }
( \mathfrak{S}_{c_1,\dots ,c_m}(N) +o(1)).
\end{equation}
We wish to discard the $n_j^{(i)} \leqslant \sqrt{N}$ restriction from the sum above. Notice that every $\eta_j$ has at most one term greater than $\sqrt{N}$. Hence,
\begin{multline}\label{6.19}
\sideset{}{^\natural}\sum_{  n_j^{(i)} \leqslant \frac{N}{c_j} \colon 1 \leqslant j \leqslant m, \, 1\leqslant i \leqslant r_j }
 1 ~=~  r_1 \cdots  r_m \sideset{}{^\natural}\sum_{ \substack{ n_1^{(1)}\leqslant \frac{N}{c_1}, \dots  ,n_m^{(1)} \leqslant \frac{N}{c_m} \\ n_j^{(i)} \leqslant \sqrt{N}: \, 1 \leqslant j \leqslant m, \, 2\leqslant i \leqslant r_j }  }
1   \\
 + ~ O \left(   \sum_{K=1}^m ~
\sideset{}{^\natural}\sum_{ \substack{ n_1^{(1)}\leqslant \frac{N}{c_1}, \dots ,n_K^{(1)} \leqslant \sqrt{N},\dots  ,n_m^{(1)} \leqslant \frac{N}{c_m} \\ n_j^{(i)} \leqslant \sqrt{N}: \, 1 \leqslant j \leqslant m, \, 2\leqslant i \leqslant r_j }  } 1
  \right). 
 \end{multline}
We assumed that $\eta_1=\eta_2=\eta_3=1$. The main term in the expression above equals
\begin{equation}\label{6.20}
 \frac{r_1 \cdots r_m}{(m-1)!} \frac{N^{m-1}}{c_1\dots c_m} \frac{ (\log \log N)^{r_1+\dots +r_m-m }}{ \log^m N }
( \mathfrak{S}_{c_1,\dots ,c_m}(N) +o(1)).
\end{equation}
Let us study the error term from (\ref{6.19}). We have
\begin{multline}\label{appro_error} \sideset{}{^\natural}\sum_{ \substack{ n_1^{(1)}\leqslant \frac{N}{c_1}, \dots , ,n_K^{(1)} \leqslant \sqrt{N},\dots  ,n_m^{(1)} \leqslant \frac{N}{c_m} \\ n_j^{(i)} \leqslant \sqrt{N}: \, 1 \leqslant j \leqslant m, \, 2\leqslant i \leqslant r_j }  } 1
\\
 = \sideset{}{^\natural}\sum_{ \substack{ n_1^{(1)}\leqslant \frac{N}{c_1}, \dots , ,n_K^{(1)} \leqslant \sqrt{N},\dots  ,n_m^{(1)} \leqslant \frac{N}{c_m} \\ Q < n_j^{(i)} \leqslant N^\gamma: \, 1 \leqslant j \leqslant m, \, 2\leqslant i \leqslant r_j }  }
1 ~+~  O \left( \sum_{\ell=1}^m \sum_{k=2}^{r_\ell} \left( {\sum}_1^{(\ell,k)} +  {\sum}_2^{(\ell,k)} \right) \right).
 \end{multline}
The 'big O' term has order at most as large as the error term from (\ref{6.20}). The new main term (which is essentially only the error term in (\ref{6.19})) can be bounded as $\leqslant N^{m-\frac{3}{2}+ rm\gamma }$.

Combining the results from this subsection we can state that
\begin{multline}\label{6.21}
 \sideset{}{^\natural}\sum_{  n_j^{(i)} \leqslant \frac{N}{c_j} \colon 1 \leqslant j \leqslant m, \, 1\leqslant i \leqslant r_j } 1 
  \\
  = \frac{r_1 \cdots r_m}{(m-1)!} \frac{N^{m-1}}{c_1 \cdots c_m} \frac{ (\log \log N)^{r_1+\dots +r_m-m }}{ \log^m N }
( \mathfrak{S}_{c_1,\dots ,c_m}(N) +o(1)).
\end{multline}

By Lemma \ref{L6.5} and Theorem \ref{Landau} we can repeat the trick from the previous subsection to obtain the following upper bound for some $1 \leqslant \ell \leqslant m$ and $1 \leqslant k_1 , k_2 \leqslant r_\ell$ for which $k_1 \not= k_2$: 
\begin{multline}
\sideset{}{^\natural}\sum_{  \substack{ n_j^{(i)} \leqslant \frac{N}{c_j} \colon 1 \leqslant j \leqslant m, \, 1\leqslant i \leqslant r_j \\ n_\ell^{(k_1)}=n_\ell^{(k_2)}  }} 1 ~\ll~
\frac{N^2}{\log^3 N} \sideset{}{'}\sum_{\substack{  n_j^{(i)} \leqslant \frac{N}{c_j} \colon 1 \leqslant j \leqslant m, \, 1\leqslant i \leqslant r_j \\ \forall_j ~~  n_j \leqslant N \\ n_\ell^{(k_1)}=n_\ell^{(k_2)} } } 1
\\
 \ll ~\frac{ N^{m-1} (\log \log N)^{r_1+\dots +r_m -m-1} }{\log^m N}.
\end{multline}
Therefore, we have the following identity
\begin{multline}\label{6.22}
\sideset{}{^\natural}\sum_{  n_j^{(i)} \leqslant \frac{N}{c_j} \colon 1 \leqslant j \leqslant m, \, 1\leqslant i \leqslant r_j } 1 
~=~
 r_1! \cdots r_m!
\sum_{   \substack{  n_1, \dots ,n_m \leqslant N \\  c_1 n_1+ \dots + c_m n_m = N }} \prod_{i=1}^m \mathbf{1}_{\Omega(n_i) = r_i} 
\\
+~ O \left( \frac{ N^{m-1} (\log \log N)^{r_1+\dots +r_m -m-1} }{\log^m N} \right) .
\end{multline}
This finally closes the proof of Theorem \ref{MAIN}.


\[ \]
\appendix
\section{}

\begin{lemma}\label{L6.5}   Let $b_1, \dots ,b_m \leqslant N$ be positive integers such that $b_1, b_2, b_3$ are fixed. Then,
\[ \mathfrak{r}_m(N;b_1,\dots ,b_m) \ll \frac{ N^{m-1} }{b_1 \dots b_m \prod_{j=1}^m \log (N/b_j)}. \]
\end{lemma}
\begin{proof}

From Theorem \ref{T4.7} we have
\[ \mathfrak{r}_3(N;b_1,b_2,b_3) \ll  \frac{ N^2}{ \log^3 N} \]
for sufficiently large integer $N$.

We can present $\mathfrak{r}_m(N;b_1, \dots ,b_m)$ as
\[ 
\sideset{}{'}\sum_{ n_4 \leqslant \frac{N}{b_4}, \dots , n_m \leqslant \frac{N}{b_m} } \left( 
\sideset{}{'}\sum_{n_1 \leqslant \frac{N}{b_1}, n_2 \leqslant \frac{N}{b_2}, n_3 \leqslant \frac{N}{b_3} }
 \mathbf{1}_{b_1 n_1 + b_2n_2 + b_3n_3 =N - b_4n_4-\dots -b_mn_m} 
\right) .
\] 
The term inside the bracket equals $\mathfrak{r}_3(N-b_4n_4- \dots -b_mn_m; b_1,b_2,b_3)$, so one gets 

\begin{multline*} \mathfrak{r}_m(N;b_1, \dots ,b_m) \ll \frac{N^2 }{b_1b_2b_3 \log^3 N} 
 \sideset{}{'}\sum_{n_4 \leqslant \frac{N}{b_4}, \dots , n_m \leqslant \frac{N}{b_m} }   1  \\
\ll \frac{N^2}{b_1b_2b_3 \log^3 N} \prod_{j=4}^m \frac{N}{b_j \log (N/b_j)}
 \ll \frac{ N^{m-1}  }{b_1 \dots b_m \prod_{j=1}^m \log (N/b_j) }. 
 \end{multline*}
\end{proof}

\begin{lemma}\label{tau_sq} For $x \geqslant 2$ we have 
\[ \sum_{n \leqslant x} \tau (n)^2 \ll x \log^3 x. \]
\end{lemma}

\begin{proof} 
See \cite{Raman}.
\end{proof}

\begin{lemma}\label{LB.1}
Let $x,y \in \mathbf{R}$. Then, for any $\delta \in (0,1)$ we have
\[  \sum_{p \leqslant x^\delta}  \frac{1}{p \log \frac{x}{p}} = (1+o_\delta (1)) \frac{\log \log x}{\log x}.\]
\end{lemma}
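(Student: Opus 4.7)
The plan is to isolate the dominant contribution to the sum from the primes $p$ that are small enough that $\log(x/p)$ is essentially $\log x$, and to show that the primes in the complementary range contribute negligibly.

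First, I would introduce an auxiliary threshold $\epsilon = \epsilon(x)$ tending to zero slowly, for instance $\epsilon = 1/\log\log x$, and split the range $p \leqslant x^\delta$ at $p = x^\epsilon$. For primes $p \leqslant x^\epsilon$ one has $\log p \leqslant \epsilon \log x$, so
\[
\log(x/p) = \log x - \log p = \log x \bigl(1 + O(\epsilon)\bigr),
\]
and Mertens' theorem gives
\[
\sum_{p \leqslant x^\epsilon} \frac{1}{p} = \log\log(x^\epsilon) + M + o(1) = \log\log x + \log\epsilon + O(1).
\]
With the choice $\epsilon = 1/\log\log x$ this is $\log\log x + O(\log\log\log x)$, and combining the two estimates yields
\[
\sum_{p \leqslant x^\epsilon} \frac{1}{p\log(x/p)} = \frac{\log\log x}{\log x}(1+o(1)).
\]
This is the desired main term.

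Next I would show that the contribution of primes in the complementary interval $x^\epsilon < p \leqslant x^\delta$ is genuinely smaller. Here the trivial bound $\log(x/p) \geqslant (1-\delta)\log x$ applies uniformly, so Mertens' theorem gives
\[
\sum_{x^\epsilon < p \leqslant x^\delta} \frac{1}{p\log(x/p)} \leqslant \frac{1}{(1-\delta)\log x}\bigl(\log\log x^\delta - \log\log x^\epsilon + o(1)\bigr) \ll_\delta \frac{\log\log\log x}{\log x},
\]
which is $o(\log\log x / \log x)$ and so can be absorbed into the $o_\delta(1)$ error. Together these two estimates give the upper bound matching the claimed asymptotic. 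For the lower bound the same splitting works even more easily: since $\log(x/p) \leqslant \log x$ for every $p \leqslant x$, the portion $p \leqslant x^\epsilon$ alone already contributes at least $\frac{1}{\log x}\sum_{p \leqslant x^\epsilon} 1/p = \frac{\log\log x}{\log x}(1+o(1))$, which matches.

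There is no real obstacle here; the only minor subtlety is balancing $\epsilon$ so that both $\log\epsilon$ and $-\log\epsilon$ are smaller than $\log\log x$ (ensuring the main term is unaffected and the second range contributes $o(\log\log x/\log x)$). The choice $\epsilon = 1/\log\log x$ handles both sides simultaneously, and the lemma follows.
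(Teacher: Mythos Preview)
Your argument is correct. The paper's own proof is a one-line hint, ``Use summation by parts and the prime number theorem,'' meaning one writes the sum as a Stieltjes integral against $d\pi(t)$ (or against $d\!\sum_{p\leqslant t}1/p$), integrates by parts, and inserts the asymptotic for the prime-counting function to evaluate the resulting integral. Your route is genuinely different: you split the range at $x^{\epsilon}$ with $\epsilon=1/\log\log x$, use Mertens' theorem on each piece, and observe that on the short initial segment $\log(x/p)$ is essentially constant while the tail contributes only $O_\delta(\log\log\log x/\log x)$. Both arguments are elementary; the partial-summation approach is more systematic and would generalize more smoothly if the weight $1/\log(x/p)$ were replaced by something less explicit, while your splitting argument is more transparent about \emph{why} the answer is what it is (namely, that almost all of the mass of $\sum_p 1/p$ sits where $\log(x/p)\approx\log x$). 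One minor remark: your separate lower-bound paragraph is redundant, since your treatment of the range $p\leqslant x^{\epsilon}$ already produces a two-sided asymptotic, not merely an upper bound; combining it with the tail estimate gives the full result directly.
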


\begin{proof}
Apply summation by parts and the prime number theorem.
\end{proof}

\begin{theorem}[Landau]\label{Landau}
 For $k \geqslant 1$ we have
\[ \sum_{n \leqslant x} \mathbf{1}_{\Omega(n) = k} = (1+o(1)) \frac{x (\log \log x)^{k-1}}{(k-1)! \log x}. \]
\end{theorem}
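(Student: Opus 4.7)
My plan is to argue by strong induction on $k$, writing $\pi_k(x) := \sum_{n \leqslant x} \mathbf{1}_{\Omega(n) = k}$. The base case $k = 1$ is the prime number theorem $\pi_1(x)=\pi(x)\sim x/\log x$.

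For the inductive step, I would exploit complete additivity of $\Omega$. Since $\Omega(n) = \sum_{p,\, j \geqslant 1 :\, p^j \mid n} 1$, summing the identity $k\,\mathbf{1}_{\Omega(n)=k} = \Omega(n)\,\mathbf{1}_{\Omega(n)=k}$ over $n\leqslant x$ and swapping the order of summation yields the recursion
\[
k \pi_k(x) \;=\; \sum_{j \geqslant 1} \sum_{p :\, p^j \leqslant x} \pi_{k-j}(x/p^j).
\]
The terms with $j\geqslant 2$ contribute, by the inductive hypothesis and $\sum_p 1/p^2 = O(1)$, at most $O\!\bigl(x(\log\log x)^{k-3}/\log x\bigr)$, which is of lower order than the target.

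For the main term $j=1$, I would split at some threshold such as $p = \log^A x$: for small $p$ the trivial bound $\pi_{k-1}(x/p) \leqslant x/p$ already limits the contribution to $O(x \log\log\log x)$, while for $p \geqslant \log^A x$ the inductive asymptotic for $\pi_{k-1}(x/p)$ applies uniformly. Partial summation against $d\pi(y)\sim dy/\log y$ then converts the main piece, after the substitution $u = \log p/\log x$ and with $\epsilon = \log 2/\log x$, into
\[
\sum_{p} \pi_{k-1}(x/p) \;\sim\; \frac{x}{(k-2)!\log x} \int_{\epsilon}^{1-\epsilon} \frac{(\log\log x + \log(1-u))^{k-2}}{u(1-u)}\, du.
\]

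The hard part is the asymptotic evaluation of this integral, and this is where I would focus the most care. Splitting at $u=1/2$: on $[\epsilon, 1/2]$ the factor $\log(1-u)$ is $O(1)$, so the integrand is $\sim (\log\log x)^{k-2}/u$, yielding $(\log\log x)^{k-2}\log(1/\epsilon)\sim (\log\log x)^{k-1}$ with coefficient $1$; on $[1/2, 1-\epsilon]$ the substitution $w = \log((1-u)\log x)$ reduces the integral to $\int_0^{\log\log x} w^{k-2}\,dw = (\log\log x)^{k-1}/(k-1)$. Adding the two contributions gives the factor $k/(k-1)$, and together with the prefactor $1/(k-2)!$ this produces $k\cdot(\log\log x)^{k-1}/(k-1)!$; dividing by $k$ in the recursion yields the claimed asymptotic.

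The main obstacle is noticing that \emph{both} endpoints of the integral contribute to leading order; a naive analysis focused only on $u\to 0$ would miss the $1/(k-1)$ piece and produce the wrong constant. A secondary technical point is making the inductive asymptotic uniform enough in $p$ to legitimize the passage from the sum to the integral, which is why the preliminary splitting at $p = \log^A x$ is built into the plan.
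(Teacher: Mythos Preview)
The paper does not prove this result; it simply refers the reader to Tenenbaum's book. Your inductive approach via the recursion $k\pi_k(x)=\sum_{j\geqslant 1}\sum_{p}\pi_{k-j}(x/p^j)$ is a classical and perfectly valid route to Landau's theorem, and the heart of your argument---the evaluation of the integral by splitting at $u=1/2$ and recognising that \emph{both} endpoints contribute to leading order, with coefficients $1$ and $1/(k-1)$ respectively---is correct and is exactly the point where such proofs are most often botched.

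There is, however, a genuine slip in your preliminary splitting. You propose to discard the range $p\leqslant\log^A x$ via the trivial bound $\pi_{k-1}(x/p)\leqslant x/p$, obtaining a contribution $O(x\log\log\log x)$. But this is far larger than the target main term, which is only of size $x(\log\log x)^{k-1}/\log x$; as written, the error swallows the signal. The split should go the other way: it is for \emph{large} $p$ that the inductive asymptotic breaks down, since there $x/p$ is bounded. For $p>x/Y$ with $Y$ a large fixed constant one has $\pi_{k-1}(x/p)=O_Y(1)$, and the number of such primes is $\ll x/\log x$, giving an acceptable $O_Y(x/\log x)$. For all $p\leqslant x/Y$ the inductive asymptotic applies uniformly (with an error depending on $Y$), and your integral computation then goes through unchanged. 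With this correction your sketch is complete.
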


\begin{proof}
See \cite{Y}.
\end{proof}

\begin{lemma}\label{L3.2}
Fix positive integers $c_1, \dots, c_m$ satisfying $(c_1,\dots ,c_m)=1$. It is true that $ \mathfrak{S}_{c_1,\dots ,c_m}(N) \not= 0$ if and only if $c_1+\dots +c_m+N \equiv 0 \mod 2$ and
\[ (N,c_2,\dots ,c_m)=\dots =(c_1,\dots ,c_{i-1},N,c_{i+1},\dots c_m)=\dots =(c_1,\dots ,c_{m-1},N)=1.\]
Moreover, $\mathfrak{S}_{c_1,\dots ,c_m}(N) \gg 1$ for every $N$ such that $ \mathfrak{S}_{c_1,\dots ,c_m}(N) \not= 0$.
\end{lemma}

\begin{proof}
Let us prove the 'moreover' part first. We may assume that no factor of $ \mathfrak{S}_{c_1,\dots ,c_m}(N)$ equals 0. Hence, if $p|N$, then at least 2 of the $c_i$ are not divisible by $p$, because otherwise either the term corresponding to $p$ would vanish or $p|(c_1,\dots, c_m)$. Therefore,
\begin{align*}
1 + (p-1) \frac{\mu \left( \frac{p}{(c_1,p)} \right) \cdots {\mu \left( \frac{p}{(c_m,p)} \right)} }{\varphi \left( \frac{p}{(c_1,p)} \right) \cdots {\varphi \left( \frac{p}{(c_m,p)} \right)}} &~\geqslant~ 1 - \frac{1}{p-1}, \\
1 - \frac{\mu \left( \frac{p}{(c_1,p)} \right) \cdots {\mu \left( \frac{p}{(c_m,p)} \right)} }{\varphi \left( \frac{p}{(c_1,p)} \right) \cdots {\varphi \left( \frac{p}{(c_m,p)} \right)}} &~\geqslant~ 1 - \frac{1}{p-1}. 
\end{align*}
The second inequality works because we are always guaranteed to have at least one of the $c_i$ that is not divisible by $p$. 
It is also true that there exist at most finitely many primes dividing $\prod_{i=1}^m c_i$. Let $z$ be any fixed real number greater than all of them.
Hence,
\begin{equation}
 \mathfrak{S}_{c_1,\dots ,c_m}(N) \geqslant 2\prod_{ \substack{2< p\leqslant z}}\left( 1 - \frac{1}{p-1} \right)  \prod_{p>z} \left( 1 - \frac{1}{(p-1)^{m-1}} \right) \gg 1.
\end{equation}

Let us consider the '$\Longrightarrow$' case. From the previous part we already know that $\mathfrak{S}_{c_1,\dots ,c_m}(N)$ equals 0 if and only if one of its factors vanishes. For any prime $p \geqslant 3$ the factor
\[ 1 + (p-1) \frac{\mu \left( \frac{p}{(c_1,p)} \right) \cdots {\mu \left( \frac{p}{(c_m,p)} \right)} }{\varphi \left( \frac{p}{(c_1,p)} \right) \cdots {\varphi \left( \frac{p}{(c_m,p)} \right)}}, \]
related to the $p|N$ case, vanishes if and only if every of the $c_i$ is divisible by $p$ except exactly one of them. The second factor actually never vanishes except possibly when $p=2$. In the $p=2$ case we note that factors related to $N$ being even and $N$ being odd simplify to
\[ 1 + (-1)^{ \# \{1 \leqslant i \leqslant m \colon 2 \nmid c_i \} }   \mbox{ \ \ \ \ \ and \ \ \ \ \ } 1 - (-1)^{ \# \{1 \leqslant i \leqslant m \colon 2 \nmid c_i \} }\]
respectively. It means that whenever $N$ has different parity than the sum $c_1+\dots+c_m$, one of these has to vanish. 

The '$\Longleftarrow$' case is now straightforward, because if $\mathfrak{S}_{c_1,\dots ,c_m}(N)=0$, then one of its factors vanishes. Consequently, either parities of $c_1 + \dots + c_m$ and $N$ do not match or there exists some prime $p\geqslant 3$ dividing $N$ and each of the $c_i$ except exactly one. 
\end{proof}

\bibliographystyle{amsplain}

\end{document}